\newif\ifUseEepic
\newcommand{\flipr}[1]{\mbox{\rotate[f]{\rotate[l]{$#1$}}}}
\newcommand{\flipv}[1]{\mbox{\rotate[f]{$#1$}}}
\newcommand{\fliph}[1]{\mbox{\rotate[f]{\rotate[u]{$#1$}}}}
\newcommand{\rotu}[1]{\mbox{\mbox{\rotate[u]{$#1$}}}}
\newcommand{\rotz}[1]{\mbox{\mbox{$#1$}}}
\newcommand{\C}{{\mathbb C}}
\newcommand{\N}{{\mathbb N}}
\newcommand{\R}{{\mathbb R}}
\newcommand{\I}{{\mathrm i}}
\newcommand{\jac}{{\mathcal J}}
\newcommand{\hess}{{\mathcal H}}
\newcommand{\diag}{\operatorname{diag}}
\newcommand{\grad}{\nabla}
\newcommand{\Mat}{\operatorname{Mat}}
\newcommand{\rk}{\operatorname{rk}}
\newcommand{\tr}{\operatorname{tr}}
\newcommand{\GL}{\operatorname{GL}}
\newcommand{\tp}{^{\rm t}}
\newcommand{\rv}{^{\rm r}}
\newcommand{\Rea}{\operatorname{Re}}
\newcommand{\Ima}{\operatorname{Im}}
\newcommand{\RightArroW}{{\mathversion{bold}$\Rightarrow$}}
\newcommand{\LeftArroW}{{\mathversion{bold}$\Leftarrow$}}
\newcommand{\antisym}{\discretionary{(anti-)}{}{(anti)}sym\-me\-try}
\newcommand{\antisyms}{\discretionary{(anti-)}{}{(anti)}sym\-me\-tries}
\newcommand{\parder}[3][Default]{
	\frac{\partial \ifthenelse{\equal{#1}{Default}}{}{^{#1}}#2}{
              \partial #3 \ifthenelse{\equal{#1}{Default}}{}{^{#1}}}}
\newcommand{\bluefill}{\texture{cccccccc 0 cccccccc 0 cccccccc 0 cccccccc 0
                                cccccccc 0 cccccccc 0 cccccccc 0 cccccccc 0
                                cccccccc 0 cccccccc 0 cccccccc 0 cccccccc 0
                                cccccccc 0 cccccccc 0 cccccccc 0 cccccccc 0}}
\newcommand{\redfill}{\texture{44444444 0 0 0 44444444 0 0 0
                               44444444 0 0 0 44444444 0 0 0
                               44444444 0 0 0 44444444 0 0 0
                               44444444 0 0 0 44444444 0 0 0}}
\newcommand{\jc}{\ifUseEepic
\begin{picture}(14,2)(-2,2)
  \redfill 
  \shade\path(0,0)(10,0)(10,10)(0,10)(0,0)
\end{picture}
\else
\begin{tikzpicture}[x=1pt,y=1pt]
\useasboundingbox (-7,-3) rectangle (7,3);
\path[draw=black,fill=red!50] (-5,-5) rectangle (5,5);
\end{tikzpicture}
\fi
}
\newcommand{\jcs}{\ifUseEepic
\begin{picture}(14,2)(-2,2)
  \redfill 
  \shade\path(0,0)(10,0)(10,10)(0,10)(0,0)
  \put(5,5){\makebox(0,0){?}}
\end{picture}
\else
\begin{tikzpicture}[x=1pt,y=1pt]
\useasboundingbox (-7,-3) rectangle (7,3);
\path[draw=black,fill=red!50] (-5,-5) rectangle (5,5);
\path (0,0) node {?};
\end{tikzpicture}
\fi
}
\newcommand{\sjc}{\ifUseEepic
\begin{picture}(14,2)(-2,2)
  \redfill 
  \shade\path(0,10)(0,0)(10,0)
  \shade\path(10,0)(10,10)(0,10)
  \path(0,10)(10,0)
  \path(0,0)(10,0)(10,10)(0,10)(0,0)
\end{picture}
\else
\begin{tikzpicture}[x=1pt,y=1pt]
\useasboundingbox (-7,-3) rectangle (7,3);
\path[draw=black,fill=red!50] (-5,5) -- (-5,-5) -- (5,-5) -- cycle;
\path[draw=black,fill=red!50] (5,-5) -- (5,5) -- (-5,5) -- cycle;
\end{tikzpicture}
\fi
}
\newcommand{\asjc}{\ifUseEepic
\begin{picture}(14,2)(-2,2)
  \bluefill 
  \shade\path(10,0)(10,10)(0,10)
  \redfill
  \shade\path(0,10)(0,0)(10,0)
  \path(0,10)(10,0)
  \path(0,0)(10,0)(10,10)(0,10)(0,0)
\end{picture}
\else
\begin{tikzpicture}[x=1pt,y=1pt]
\useasboundingbox (-7,-3) rectangle (7,3);
\path[draw=black,fill=red!50] (-5,5) -- (-5,-5) -- (5,-5) -- cycle;
\path[draw=black,fill=blue!50] (5,-5) -- (5,5) -- (-5,5) -- cycle;
\end{tikzpicture}
\fi
}
\newcommand{\trasjc}{\ifUseEepic
\begin{picture}(14,2)(-2,2)
  \bluefill 
  \shade\path(0,0)(10,0)(10,10)
  \redfill
  \shade\path(5,10)(0,10)(0,0)(5,0)
  \shade\path(10,5)(10,10)(0,10)(0,5)
  \path(0,0)(10,10)
  \path(0,0)(10,0)(10,10)(0,10)(0,0)
\end{picture}
\else
\begin{tikzpicture}[x=1pt,y=1pt]
\useasboundingbox (-7,-3) rectangle (7,3);
\path[draw=black,fill=blue!50] (5,5) -- (5,-5) -- (-5,-5) -- cycle;
\path[draw=black,fill=red!50] (-5,-5) -- (-5,5) -- (5,5) -- cycle;
\path[draw=black,fill=red!50] (0,0) -- (-5,-5) -- (0,-5);
\path[draw=black,fill=red!50] (5,0) -- (5,5) -- (0,0);
\end{tikzpicture}
\fi
}
\newcommand{\dsjc}{\ifUseEepic
\begin{picture}(14,2)(-2,2)
  \bluefill 
  \shade\path(10,0)(10,10)(0,10)
  \redfill
  \shade\path(0,5)(0,0)(10,0)(10,5)
  \shade\path(5,10)(0,10)(0,0)(5,0)
  \path(10,0)(0,10)
\end{picture}
\else
\begin{tikzpicture}[x=1pt,y=1pt]
\useasboundingbox (-7,-3) rectangle (7,3);
\path[draw=black,fill=blue!50] (5,-5) -- (5,5) -- (-5,5) -- cycle;
\path[draw=black,fill=red!50] (-5,5) -- (-5,-5) -- (5,-5) -- cycle;
\path[draw=black,fill=red!50] (0,0) -- (-5,5) -- (0,5);
\path[draw=black,fill=red!50] (5,0) -- (5,-5) -- (0,0);
\end{tikzpicture}
\fi
}
\newcommand{\crjc}{\ifUseEepic
\begin{picture}(14,2)(-2,2)
  \redfill 
  \shade\path(10,10)(0,10)(5,5)(10,10)
  \shade\path(10,0)(10,10)(5,5)(10,0)
  \shade\path(0,0)(10,0)(5,5)(0,0)
  \shade\path(0,10)(0,0)(5,5)(0,10)
\end{picture}
\else
\begin{tikzpicture}[x=1pt,y=1pt]
\useasboundingbox (-7,-3) rectangle (7,3);
\path[draw=black,fill=red!50] (-5,-5) -- (0,0) -- (5,-5) -- cycle;
\path[draw=black,fill=red!50] (5,-5) -- (0,0) -- (5,5) -- cycle;
\path[draw=black,fill=red!50] (5,5) -- (0,0) -- (-5,5) -- cycle;
\path[draw=black,fill=red!50] (-5,5) -- (0,0) -- (-5,-5) -- cycle;
\end{tikzpicture}
\fi
}
\newcommand{\crudjc}{\ifUseEepic
\begin{picture}(14,2)(-2,2)
  \bluefill 
  \shade\path(10,10)(0,10)(5,5)(10,10)
  \redfill 
  \shade\path(10,0)(10,10)(5,5)(10,0)
  \bluefill 
  \shade\path(0,0)(10,0)(5,5)(0,0)
  \redfill 
  \shade\path(0,10)(0,0)(5,5)(0,10)
\end{picture}
\else
\begin{tikzpicture}[x=1pt,y=1pt]
\useasboundingbox (-7,-3) rectangle (7,3);
\path[draw=black,fill=blue!50] (-5,-5) -- (0,0) -- (5,-5) -- cycle;
\path[draw=black,fill=red!50] (5,-5) -- (0,0) -- (5,5) -- cycle;
\path[draw=black,fill=blue!50] (5,5) -- (0,0) -- (-5,5) -- cycle;
\path[draw=black,fill=red!50] (-5,5) -- (0,0) -- (-5,-5) -- cycle;
\end{tikzpicture}
\fi
}
\newcommand{\crurjc}{\ifUseEepic
\begin{picture}(14,2)(-2,2)
  \bluefill 
  \shade\path(10,10)(0,10)(5,5)(10,10)
  \shade\path(10,0)(10,10)(5,5)(10,0)
  \redfill 
  \shade\path(0,0)(10,0)(5,5)(0,0)
  \shade\path(0,10)(0,0)(5,5)(0,10)
\end{picture}
\else
\begin{tikzpicture}[x=1pt,y=1pt]
\useasboundingbox (-7,-3) rectangle (7,3);
\path[draw=black,fill=red!50] (-5,-5) -- (0,0) -- (5,-5) -- cycle;
\path[draw=black,fill=blue!50] (5,-5) -- (0,0) -- (5,5) -- cycle;
\path[draw=black,fill=blue!50] (5,5) -- (0,0) -- (-5,5) -- cycle;
\path[draw=black,fill=red!50] (-5,5) -- (0,0) -- (-5,-5) -- cycle;
\end{tikzpicture}
\fi
}
\newcommand{\crrdjc}{\ifUseEepic
\begin{picture}(14,2)(-2,2)
  \redfill 
  \shade\path(10,10)(0,10)(5,5)(10,10)
  \bluefill
  \shade\path(10,0)(10,10)(5,5)(10,0)
  \shade\path(0,0)(10,0)(5,5)(0,0)
  \redfill 
  \shade\path(0,10)(0,0)(5,5)(0,10)
\end{picture}
\else
\begin{tikzpicture}[x=1pt,y=1pt]
\useasboundingbox (-7,-3) rectangle (7,3);
\path[draw=black,fill=blue!50] (-5,-5) -- (0,0) -- (5,-5) -- cycle;
\path[draw=black,fill=blue!50] (5,-5) -- (0,0) -- (5,5) -- cycle;
\path[draw=black,fill=red!50] (5,5) -- (0,0) -- (-5,5) -- cycle;
\path[draw=black,fill=red!50] (-5,5) -- (0,0) -- (-5,-5) -- cycle;
\end{tikzpicture}
\fi
}
\newcommand{\crujc}{\ifUseEepic
\begin{picture}(14,2)(-2,2)
  \bluefill 
  \shade\path(10,10)(0,10)(5,5)(10,10)
  \redfill 
  \shade\path(10,0)(10,10)(5,5)(10,0)
  \shade\path(0,0)(10,0)(5,5)(0,0)
  \shade\path(0,10)(0,0)(5,5)(0,10)
\end{picture}
\else
\begin{tikzpicture}[x=1pt,y=1pt]
\useasboundingbox (-7,-3) rectangle (7,3);
\path[draw=black,fill=red!50] (-5,-5) -- (0,0) -- (5,-5) -- cycle;
\path[draw=black,fill=red!50] (5,-5) -- (0,0) -- (5,5) -- cycle;
\path[draw=black,fill=blue!50] (5,5) -- (0,0) -- (-5,5) -- cycle;
\path[draw=black,fill=red!50] (-5,5) -- (0,0) -- (-5,-5) -- cycle;
\end{tikzpicture}
\fi
}
\newcommand{\crrjc}{\ifUseEepic
\begin{picture}(14,2)(-2,2)
  \redfill
  \shade\path(10,10)(0,10)(5,5)(10,10)
  \bluefill 
  \shade\path(10,0)(10,10)(5,5)(10,0)
  \redfill 
  \shade\path(0,0)(10,0)(5,5)(0,0)
  \shade\path(0,10)(0,0)(5,5)(0,10)
\end{picture}
\else
\begin{tikzpicture}[x=1pt,y=1pt]
\useasboundingbox (-7,-3) rectangle (7,3);
\path[draw=black,fill=red!50] (-5,-5) -- (0,0) -- (5,-5) -- cycle;
\path[draw=black,fill=blue!50] (5,-5) -- (0,0) -- (5,5) -- cycle;
\path[draw=black,fill=red!50] (5,5) -- (0,0) -- (-5,5) -- cycle;
\path[draw=black,fill=red!50] (-5,5) -- (0,0) -- (-5,-5) -- cycle;
\end{tikzpicture}
\fi
}
\newcommand{\crdjc}{\ifUseEepic
\begin{picture}(14,2)(-2,2)
  \redfill 
  \shade\path(10,10)(0,10)(5,5)(10,10)
  \shade\path(10,0)(10,10)(5,5)(10,0)
  \bluefill
  \shade\path(0,0)(10,0)(5,5)(0,0)
  \redfill 
  \shade\path(0,10)(0,0)(5,5)(0,10)
\end{picture}
\else
\begin{tikzpicture}[x=1pt,y=1pt]
\useasboundingbox (-7,-3) rectangle (7,3);
\path[draw=black,fill=blue!50] (-5,-5) -- (0,0) -- (5,-5) -- cycle;
\path[draw=black,fill=red!50] (5,-5) -- (0,0) -- (5,5) -- cycle;
\path[draw=black,fill=red!50] (5,5) -- (0,0) -- (-5,5) -- cycle;
\path[draw=black,fill=red!50] (-5,5) -- (0,0) -- (-5,-5) -- cycle;
\end{tikzpicture}
\fi
}
\newcommand{\crljc}{\ifUseEepic
\begin{picture}(14,2)(-2,2)
  \redfill 
  \shade\path(10,10)(0,10)(5,5)(10,10)
  \shade\path(10,0)(10,10)(5,5)(10,0)
  \shade\path(0,0)(10,0)(5,5)(0,0)
  \bluefill 
  \shade\path(0,10)(0,0)(5,5)(0,10)
\end{picture}
\else
\begin{tikzpicture}[x=1pt,y=1pt]
\useasboundingbox (-7,-3) rectangle (7,3);
\path[draw=black,fill=red!50] (-5,-5) -- (0,0) -- (5,-5) -- cycle;
\path[draw=black,fill=red!50] (5,-5) -- (0,0) -- (5,5) -- cycle;
\path[draw=black,fill=red!50] (5,5) -- (0,0) -- (-5,5) -- cycle;
\path[draw=black,fill=blue!50] (-5,5) -- (0,0) -- (-5,-5) -- cycle;
\end{tikzpicture}
\fi
}
\newcommand{\rsjc}{\ifUseEepic
\begin{picture}(14,2)(-2,2)
  \redfill 
  \shade\path(0,0)(10,0)(10,10)
  \shade\path(10,10)(0,10)(0,0)
  \path(0,0)(10,10)
  \path(0,0)(10,0)(10,10)(0,10)(0,0)
\end{picture}
\else
\begin{tikzpicture}[x=1pt,y=1pt]
\useasboundingbox (-7,-3) rectangle (7,3);
\path[draw=black,fill=red!50] (5,5) -- (-5,5) -- (-5,-5) -- cycle;
\path[draw=black,fill=red!50] (-5,-5) -- (5,-5) -- (5,5) -- cycle;
\end{tikzpicture}
\fi
}
\newcommand{\rasjc}{\ifUseEepic
\begin{picture}(14,2)(-2,2)
  \bluefill
  \shade\path(0,0)(10,0)(10,10)
  \redfill 
  \shade\path(10,10)(0,10)(0,0)
  \path(0,0)(10,10)
  \path(0,0)(10,0)(10,10)(0,10)(0,0)
\end{picture}
\else
\begin{tikzpicture}[x=1pt,y=1pt]
\useasboundingbox (-7,-3) rectangle (7,3);
\path[draw=black,fill=red!50] (5,5) -- (-5,5) -- (-5,-5) -- cycle;
\path[draw=black,fill=blue!50] (-5,-5) -- (5,-5) -- (5,5) -- cycle;
\end{tikzpicture}
\fi
}
\newcommand{\rsnjc}{\ifUseEepic
\begin{picture}(14,2)(-2,2)
  \path(10,0)(10,10)(0,10)
  \redfill
  \shade\path(0,5)(0,0)(10,0)(10,5)
  \shade\path(5,10)(0,10)(0,0)(5,0)
  \path(0,0)(10,10)
\end{picture}
\else
\begin{tikzpicture}[x=1pt,y=1pt]
\useasboundingbox (-7,-3) rectangle (7,3);
\path[draw=black,fill=red!50] (5,5) -- (-5,5) -- (-5,-5) -- cycle;
\path[draw=black,fill=red!50] (-5,-5) -- (5,-5) -- (5,5) -- cycle;
\path[draw=black,fill=white] (5,0) -- (5,5) -- (0,0);
\path[draw=black,fill=white] (0,0) -- (5,5) -- (0,5);
\end{tikzpicture}
\fi
}
\newcommand{\cjc}{\ifUseEepic
\begin{picture}(14,2)(-2,2)
  \bluefill
  \shade\path(10,0)(10,10)(0,10)
  \redfill
  \shade\path(5,10)(0,10)(0,0)(5,0)
  \shade\path(0,5)(0,0)(10,0)(10,5)
  \put(5,5){\circle*{1}}
\end{picture}
\else
\begin{tikzpicture}[x=1pt,y=1pt]
\useasboundingbox (-7,-3) rectangle (7,3);
\path[fill=red!50] (-5,-5) rectangle (5,5);
\path[fill=blue!50] (0,0) rectangle (5,5);
\path[fill=black] (0,0) circle (1pt);
\draw[black] (-5,-5) rectangle (5,5);
\end{tikzpicture}
\fi
}
\newcommand{\acjc}{\ifUseEepic
\begin{picture}(14,2)(-2,2)
  \bluefill
  \shade\path(0,0)(10,0)(10,10)
  \redfill
  \shade\path(5,10)(0,10)(0,0)(5,0)
  \shade\path(0,5)(0,10)(10,10)(10,5)
  \put(5,5){\circle*{1}}
\end{picture}
\else
\begin{tikzpicture}[x=1pt,y=1pt]
\useasboundingbox (-7,-3) rectangle (7,3);
\path[fill=red!50] (-5,-5) rectangle (5,5);
\path[fill=blue!50] (0,0) rectangle (5,-5);
\path[fill=black] (0,0) circle (1pt);
\draw[black] (-5,-5) rectangle (5,5);
\end{tikzpicture}
\fi
}
\newcommand{\djc}{\ifUseEepic
\begin{picture}(14,2)(-2,2)
  \redfill
  \shade\path(0,0)(10,0)(10,10)(0,10)(0,0)
  \put(5,5){\circle*{1}}
\end{picture}
\else
\begin{tikzpicture}[x=1pt,y=1pt]
\useasboundingbox (-7,-3) rectangle (7,3);
\path[fill=red!50] (-5,-5) rectangle (5,5);
\path[fill=black] (0,0) circle (1pt);
\draw[black] (-5,-5) rectangle (5,5);
\end{tikzpicture}
\fi
}
\newcommand{\adjc}{\ifUseEepic
\begin{picture}(14,2)(-2,2)
  \bluefill
  \shade\path(0,0)(10,0)(10,10)(0,10)(0,0)
  \redfill
  \shade\path(5,10)(0,10)(0,0)(5,0)
  \put(5,5){\circle*{1}}
\end{picture}
\else
\begin{tikzpicture}[x=1pt,y=1pt]
\useasboundingbox (-7,-3) rectangle (7,3);
\path[fill=red!50] (-5,-5) rectangle (5,5);
\path[fill=blue!50] (0,-5) rectangle (5,5);
\path[fill=black] (0,0) circle (1pt);
\draw[black] (-5,-5) rectangle (5,5);
\end{tikzpicture}
\fi
}
\newcommand{\hvjc}{\ifUseEepic
\begin{picture}(14,2)(-2,2)
  \redfill
  \shade\path(0,0)(10,0)(10,10)(0,10)(0,0)
  \path(5,0)(5,10)
  \path(10,5)(0,5)
\end{picture}
\else
\begin{tikzpicture}[x=1pt,y=1pt]
\useasboundingbox (-7,-3) rectangle (7,3);
\path[draw=black,fill=red!50] (-5,0) rectangle (0,-5);
\path[draw=black,fill=red!50] (0,-5) rectangle (5,0);
\path[draw=black,fill=red!50] (5,0) rectangle (0,5);
\path[draw=black,fill=red!50] (0,5) rectangle (-5,0);
\end{tikzpicture}
\fi
}
\newcommand{\ahvjc}{\ifUseEepic
\begin{picture}(14,2)(-2,2)
  \bluefill
  \shade\path(0,0)(10,0)(10,5)(0,5)(0,0)
  \redfill
  \shade\path(0,5)(10,5)(10,10)(0,10)(0,5)
  \path(5,0)(5,10)
\end{picture}
\else
\begin{tikzpicture}[x=1pt,y=1pt]
\useasboundingbox (-7,-3) rectangle (7,3);
\path[draw=black,fill=blue!50] (-5,0) rectangle (0,-5);
\path[draw=black,fill=blue!50] (0,-5) rectangle (5,0);
\path[draw=black,fill=red!50] (5,0) rectangle (0,5);
\path[draw=black,fill=red!50] (0,5) rectangle (-5,0);
\end{tikzpicture}
\fi
}
\newcommand{\havjc}{\ifUseEepic
\begin{picture}(14,2)(-2,2)
  \bluefill
  \shade\path(5,0)(10,0)(10,10)(5,10)(5,0)
  \redfill
  \shade\path(0,0)(5,0)(5,10)(0,10)(0,0)
  \path(10,5)(0,5)
\end{picture}
\else
\begin{tikzpicture}[x=1pt,y=1pt]
\useasboundingbox (-7,-3) rectangle (7,3);
\path[draw=black,fill=red!50] (-5,0) rectangle (0,-5);
\path[draw=black,fill=blue!50] (0,-5) rectangle (5,0);
\path[draw=black,fill=blue!50] (5,0) rectangle (0,5);
\path[draw=black,fill=red!50] (0,5) rectangle (-5,0);
\end{tikzpicture}
\fi
}
\newcommand{\ahavjc}{\ifUseEepic
\begin{picture}(14,2)(-2,2)
  \bluefill
  \shade\path(5,5)(10,5)(10,10)(5,10)(5,5)
  \shade\path(0,0)(5,0)(5,5)(0,5)(0,0)
  \redfill
  \shade\path(0,5)(5,5)(5,10)(0,10)(0,5)
  \shade\path(5,0)(10,0)(10,5)(5,5)(5,0)
\end{picture}
\else
\begin{tikzpicture}[x=1pt,y=1pt]
\useasboundingbox (-7,-3) rectangle (7,3);
\path[draw=black,fill=blue!50] (-5,0) rectangle (0,-5);
\path[draw=black,fill=red!50] (0,-5) rectangle (5,0);
\path[draw=black,fill=blue!50] (5,0) rectangle (0,5);
\path[draw=black,fill=red!50] (0,5) rectangle (-5,0);
\end{tikzpicture}
\fi
}
\newcommand{\hvsjc}{\ifUseEepic
\begin{picture}(14,2)(-2,2)
  \redfill
  \shade\path(0,0)(10,0)(10,10)(0,10)(0,0)
  \path(5,0)(5,10)
  \path(10,5)(0,5)
  \path(0,0)(10,10)
  \path(0,10)(10,0)
\end{picture}
\else
\begin{tikzpicture}[x=1pt,y=1pt]
\useasboundingbox (-7,-3) rectangle (7,3);
\path[draw=black,fill=red!50] (-5,0) rectangle (0,-5);
\path[draw=black,fill=red!50] (0,-5) rectangle (5,0);
\path[draw=black,fill=red!50] (5,0) rectangle (0,5);
\path[draw=black,fill=red!50] (0,5) rectangle (-5,0);
\draw[black] (-5,-5) -- (5,5);
\draw[black] (5,-5) -- (-5,5);
\end{tikzpicture}
\fi
}
\newcommand{\ahvsjc}{\ifUseEepic
\begin{picture}(14,2)(-2,2)
  \bluefill
  \shade\path(0,0)(10,0)(10,5)(0,5)(0,0)
  \redfill
  \shade\path(0,5)(10,5)(10,10)(0,10)(0,5)
  \path(5,0)(5,10)
  \path(0,0)(10,10)
  \path(0,10)(10,0)
\end{picture}
\else
\begin{tikzpicture}[x=1pt,y=1pt]
\useasboundingbox (-7,-3) rectangle (7,3);
\path[draw=black,fill=blue!50] (-5,0) rectangle (0,-5);
\path[draw=black,fill=blue!50] (0,-5) rectangle (5,0);
\path[draw=black,fill=red!50] (5,0) rectangle (0,5);
\path[draw=black,fill=red!50] (0,5) rectangle (-5,0);
\draw[black] (-5,-5) -- (5,5);
\draw[black] (5,-5) -- (-5,5);
\end{tikzpicture}
\fi
}
\newcommand{\havsjc}{\ifUseEepic
\begin{picture}(14,2)(-2,2)
  \bluefill
  \shade\path(5,0)(10,0)(10,10)(5,10)(5,0)
  \redfill
  \shade\path(0,0)(5,0)(5,10)(0,10)(0,0)
  \path(10,5)(0,5)
  \path(0,0)(10,10)
  \path(0,10)(10,0)
\end{picture}
\else
\begin{tikzpicture}[x=1pt,y=1pt]
\useasboundingbox (-7,-3) rectangle (7,3);
\path[draw=black,fill=red!50] (-5,0) rectangle (0,-5);
\path[draw=black,fill=blue!50] (0,-5) rectangle (5,0);
\path[draw=black,fill=blue!50] (5,0) rectangle (0,5);
\path[draw=black,fill=red!50] (0,5) rectangle (-5,0);
\draw[black] (-5,-5) -- (5,5);
\draw[black] (5,-5) -- (-5,5);
\end{tikzpicture}
\fi
}
\newcommand{\ahavsjc}{\ifUseEepic
\begin{picture}(14,2)(-2,2)
  \bluefill
  \shade\path(5,5)(10,5)(10,10)(5,10)(5,5)
  \shade\path(0,0)(5,0)(5,5)(0,5)(0,0)
  \redfill
  \shade\path(0,5)(5,5)(5,10)(0,10)(0,5)
  \shade\path(5,0)(10,0)(10,5)(5,5)(5,0)
  \path(0,0)(10,10)
  \path(0,10)(10,0)
\end{picture}
\else
\begin{tikzpicture}[x=1pt,y=1pt]
\useasboundingbox (-7,-3) rectangle (7,3);
\path[draw=black,fill=blue!50] (-5,0) rectangle (0,-5);
\path[draw=black,fill=red!50] (0,-5) rectangle (5,0);
\path[draw=black,fill=blue!50] (5,0) rectangle (0,5);
\path[draw=black,fill=red!50] (0,5) rectangle (-5,0);
\draw[black] (-5,-5) -- (5,5);
\draw[black] (5,-5) -- (-5,5);
\end{tikzpicture}
\fi
}
\newcommand{\axjc}{\ifUseEepic
\begin{picture}(14,2)(-2,2)
  \redfill
  \shade\path(0,0)(10,0)(10,10)(0,10)(0,0)
  \whiten\path(0,10)(4.6,10)(4.6,6)(0,6)(0,10)
  \whiten\path(10,0)(6,0)(6,4.6)(10,4.6)(10,0)
  \thicklines\color{white}\path(4.6,10)(4.6,6)(0,6)
  \thicklines\color{white}\path(6,0)(6,4.6)(10,4.6)
  \thinlines\color{black}\path(0,0)(10,0)(10,10)(0,10)(0,0)
\end{picture}
\else
\begin{tikzpicture}[x=1pt,y=1pt]
\useasboundingbox (-7,-3) rectangle (7,3);
\path[fill=red!50] (-5,-5) rectangle (5,5);
\path[fill=white] (0,-5) rectangle (5,0);
\path[fill=white] (-5,0) rectangle (0,5);
\draw[black] (-5,-5) rectangle (5,5);
\end{tikzpicture}
\fi
}
\theoremstyle{plain}
\newtheorem{theorem}{Theorem}[section]
\newtheorem{lemma}[theorem]{Lemma}
\newtheorem{corollary}[theorem]{Corollary}
\theoremstyle{definition}
\newtheorem{definition}[theorem]{Definition}
\theoremstyle{remark}
\theoremstyle{plain}
\title{Symmetric Jacobians\footnote{An earlier version of this article appeared as Chapter 2 
in the author's Ph.D. thesis \cite{homokema}.}}
\author{Michiel de Bondt\footnote{The author's Ph.D. project was supported by The Netherlands
                                  Organization for Scientific Research (NWO).} \\
Department of Mathematics, Radboud University \\ 
Nijmegen, The Netherlands \\
\emph{E-mail:} M.deBondt@math.ru.nl}
\begin{document}

\maketitle

\begin{abstract}
\noindent
This article is about polynomial maps with a certain symmetry and/or antisymmetry
in their Jacobians, and whether
the Jacobian Conjecture is satisfied for such maps, or whether it
is sufficient to prove the Jacobian Conjecture for such maps.

For instance, we show that it suffices to prove the Jacobian conjecture
for polynomial maps $x + H$ over $\C$ such that $\jac H$ satisfies all 
symmetries of the square, where $H$ is homogeneous of arbitrary degree 
$d \ge 3$.
\end{abstract}

\bigskip\noindent
\emph{Key words:} Jacobian conjecture, (anti)symmetric Jacobian matrix.

\bigskip\noindent
\emph{MSC 2010:} 14R15, 14R10, 15B99, 15B57, 20F55.

\section*{Introduction}

Let $F$ be a polynomial map over a field $K$ of characteristic zero.
The Jacobian Conjecture asserts that $F$ has a polynomial inverse in case
its Jacobian determinant $\det \jac F$ is a unit in $K$. It has been shown that in 
order to prove the Jacobian conjecture for all fields of characteristic zero, one can 
take an arbitrary such field $K$ and a favorite integer $d \ge 3$, after which it 
suffices to prove the Jacobian Conjecture for polynomial maps over $K$
of the form $F = x + H$, where $x$ is the identity map and $H$ is homogeneous 
of degree $d$.

We write $\jac F$ for the Jacobian of a polynomial map $F$, $\hess f$ for the 
Hessian of a single polynomial $f$, and $\grad f$ for the gradient map of
a single polynomial $f$. Notice that $\hess f = \jac \grad f$.
All results in this paper are about maps of the form $F = x + H$, with 
\antisym\ conditions on $\jac H$. 

Most results are about two such maps, 
say $F = x + H$ and $\tilde{F} = \tilde{x} + \tilde{H}$, each with their own 
\antisym\ conditions on $\jac H = \jac_x H$ and $\jac_{\tilde{x}} \tilde{H}$ respectively, where the 
dimension of $\tilde{F}$ is one, two or four times that of $F$ (depending on the actual result), 
and $\tilde{x}$ is the identity in the proper dimension. For each of these results, 
the reader may choose any of the following additional conditions when desired:
\begin{itemize}

\item $\jac_x H$ and $\jac_{\tilde{x}} \tilde{H}$ are both singular or even nilpotent,

\item for some fixed arbitrary subset $S \subseteq \N$, $H$ and 
$\tilde{H}$ only have terms whose degrees are contained in $S$.

\end{itemize}
One can, e.g., assume that $H$ and $\tilde{H}$ only have terms of degree 
three, in which case the nilpotency of $\jac_x H$ and $\jac_{\tilde{x}} \tilde{H}$
already follows from the Keller condition. We have this Keller condition implicitly in case 
the actual result is the equivalence of the Jacobian conjecture for maps of the form $F = x + H$ 
and that for maps of the form $\tilde{F} = \tilde{x} + \tilde{H}$, since the Jacobian conjecture 
is an assertion about Keller maps.

We associate vectors with column matrices and write $M\tp$ for the {\em transpose}
of a matrix $M$. We write $M\rv$ for the {\em reverse} of a matrix $M$, i.e.\@
if $M$ has $n$ rows, then the $i$-th row of $M\rv$ is equal to the $(n+1-i)$-th
row of $M$ for each $i$. Notice that the symmetries corresponding to the matrix operators 
$M \mapsto M\tp$ and $M \mapsto M\rv$ generate the whole
dihedral symmetry group of the square. If $f$ is a single polynomial, then we can
view $f$ as a polynomial map with only one component, and we have
$\jac f = (\grad f)\tp$.

Both $x = x_1, x_2, \ldots, x_n$ and $y = y_1, y_2, \ldots, 
y_n$ are $n$-tuples of variables, thus $\C[x,y]$ is the coordinate ring of
complex $2n$-space. By taking into account the order of variables in $\C[x,y]$,
$(x,y)$ is the identity map of the above complex space. But $(x,y)$ is also a 
vector of $2n$ variables.

Notice that besides the matrix equality 
\begin{equation}
M\rv M' = (I_m\rv M) M' = I_m\rv (M M') = (MM')\rv \label{mrev}
\end{equation}
where $M$ has height $m$ and $I_m$ is the identity matrix of size $m$,
we have the following equalities:
\begin{alignat}{2}
\hess_X\rv f &:= (\hess_X f)\rv &&= \big(\jac_X (\grad_X f)\big)\rv \label{hrev} \\
\jac_X\rv  F &:= (\jac_X F)\rv  &&= \jac_X (F\rv) \label{jrev} \\
\grad_X\rv f &:= (\grad_X f)\rv &&= \grad_{X\rv} f \label{grev}
\end{alignat}

We will use the above equalities in the rest of this paper, which is organized as follows.
In section \ref{sec1} and in theorem \ref{sdjck}, we formulate results about the Jacobian conjecture for 
polynomial maps $x + H$ such that $\jac H$ has certain \antisym\ properties with respect to the
diagonal and/or the antidiagonal. In section \ref{sec2}, we formulate results about the 
Jacobian conjecture for polynomial maps $x + H$ such that $\jac H$ has certain \antisym\ properties 
with respect to the center, possibly among other \antisym\ properties. The reason that theorem \ref{sdjck}
contains results that belong to section \ref{sec1}, is that maps $x + H$, such 
that $\jac H$ has certain \antisym\ properties with respect to both the diagonal and the antidiagonal,
belong to section \ref{sec2} as well. 

In section \ref{sec3}, we formulate results about the (linear) dependence problem (for Jacobians)
for polynomial maps $x + H$ such that $\jac H$ has certain \antisym\ properties. For the definition
of this dependence problem, we refer to the beginning of section \ref{sec3}. At last, the conclusion 
follows.

\section{Diagonally symmetric variants of the Jacobian conjecture} \label{sec1}

We define \antisym\ properties by pictures that visualize them.

\begin{definition}
$\jc(K,n)$ means that the Jacobian conjecture is satisfied for 
$n$-dimensional maps $F = x + H$ over the field $K$, i.e., $F$ is invertible in case 
$\det \jac F \in K^{*}$, such that the degree of each term of $H$
is contained in a fixed set $S \subseteq \N$, and optionally $\det \jac H = 0$ 
or even $(\jac H)^n = 0$. 

So we do not assume that $\det JF = 1$ necessarily, except when
we assume $(\jac H)^n = 0$, in which case $\det JF = 1$ as a consequence, or
$1 \notin S$, in which case $\det JF = 1$ follows from $\det \jac F \in K^{*}$.

$\sjc(K,n)$ and $\rsjc(K,n)$ mean that the Jacobian conjecture is
satisfied for $n$-dimensional maps $F = x + H$ over $K$, which 
have a symmetric Jacobian with respect to the diagonal and the 
anti-diagonal respectively, where $H$ has the same partially 
chosen properties as in the definition of $\jc(K,n)$.

$\asjc(K,n)$ and $\rasjc(K,n)$ mean that the Jacobian conjecture is
satisfied for $n$-dimensional maps $F = x + H$ over $K$, for which $\jac H$ is 
anti-symmetric (i.e.\@ applying the `symmetry' negates the matrix)
with respect to the diagonal and the anti-diagonal respectively, where
$H$ has the same partially chosen properties as in the definition of $\jc(K,n)$.

In the definition of $\dsjc(K,n)$, the `symmetry' is partially an antisymmetry, 
namely where colors on opposite sides of the diagonal do not match.
\end{definition}

\noindent
In the proofs of our results, we will use $\jc$, $\sjc$, $\rsjc$
$\asjc$, $\rasjc$ and $\dsjc$ without parenthesized arguments to indicate the 
corresponding matrix \antisym. So we have, e.g., $\jac H = \jac H\tp$ if $\jac H$ has 
symmetry $\sjc$, and $\jac H = -\jac H\tp$ in case $\jac H$ has antisymmetry $\asjc$.

If the \antisym\ separates
several parts of the square, then entries on the edge of separation must
satisfy both \antisym\ conditions. In this manner, the row and column in the 
middle of matrices with \antisym\ $\dsjc$ must be zero when the dimension is odd,
since they are equal to each other and to the opposites of each other.

For matrices with antisymmetry $\asjc(K,n)$, the numbers on the diagonal must
be equal to their opposites by definition of $\asjc$, because the antisymmetry holds 
for the edge of the light and dark regions as well. So $\asjc$ is just the regular 
matrix antisymmetry with zeroes on the diagonal.

\begin{theorem}[Meng] \label{menghess}
Assume $K$ is a field of characteristic zero. 
Then $\rsjc(K,2n)$ implies $\jc(K,n)$. 
\end{theorem}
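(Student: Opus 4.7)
The plan is to reduce an $n$-dimensional Keller map $F = x + H$ to a $2n$-dimensional Keller map $\tilde F = (x,y) + \tilde H$ whose Jacobian is symmetric about the antidiagonal, in such a way that a polynomial inverse of $\tilde F$ produces one for $F$. The key observation, which follows from the fact that $\hess f$ is always symmetric together with (\ref{jrev}) applied to $\grad f$, is that for any polynomial $f$ in the $2n$ variables $(x,y) = (x_1,\ldots,x_n,y_1,\ldots,y_n)$, the matrix $(\hess_{(x,y)} f)\rv = \jac_{(x,y)}((\grad_{(x,y)} f)\rv)$ is automatically symmetric about the antidiagonal.

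Concretely, I would pick the bilinear polynomial
\[ f(x,y) := \sum_{j=1}^{n} H_{n+1-j}(x)\, y_j, \]
set $\tilde H := (\grad_{(x,y)} f)\rv$, and put $\tilde F := (x,y) + \tilde H$. Then $\jac \tilde H$ has the required $\rsjc$-symmetry by the observation above. Expanding $\tilde H$ entrywise one finds $\tilde H = (H(x),\,G(x,y))$ as a stacked column, where $G$ is linear in $y$ with coefficient matrix $\jac_y G = J(\jac_x H)\tp J$ and $J$ is the $n\times n$ reversal matrix. Thus $\jac \tilde F$ is block lower triangular with diagonal blocks $I_n + \jac_x H$ and $I_n + \jac_y G$; since $J$-congruence preserves the characteristic polynomial, $\det \jac \tilde F = (\det \jac F)^{2} \in K^{\ast}$. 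For the optional side-conditions, each term of $G$ has the same total degree as some term of $H$ (the extra $x$-derivative is compensated by the factor of $y_j$), so the prescribed degree set $S$ transfers from $H$ to $\tilde H$; and if $(\jac_x H)^n = 0$ then $(\jac_y G)^n = 0$ by conjugation, which together with the block-triangular form forces $(\jac \tilde H)^{2n} = 0$.

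Next, I would apply $\rsjc(K,2n)$ to obtain a polynomial inverse $\tilde G = (\tilde G_1, \tilde G_2)$ of $\tilde F$. Since $G$ is linear in $y$, $G(x,0) = 0$ and hence $\tilde F(x,0) = (F(x),0)$; substituting into $\tilde G \circ \tilde F = \mathrm{id}$ yields $\tilde G_1(F(x),0) = x$. Conversely, writing $\tilde G(u,0) = (p,q)$ and substituting into $\tilde F \circ \tilde G = \mathrm{id}$ gives $p + H(p) = u$, that is, $F(\tilde G_1(u,0)) = u$. Hence $u \mapsto \tilde G_1(u,0)$ is a polynomial inverse of $F$, proving $\jc(K,n)$.

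The main obstacle is essentially bookkeeping: one must verify that reversing the gradient turns the ordinary symmetry of $\hess f$ into antidiagonal symmetry rather than some other symmetry, that the congruence $M \mapsto JM\tp J$ preserves both the characteristic polynomial and the nilpotency index, and that multiplying an $x$-derivative of a term of $H$ by $y_j$ restores its degree so that the degree-set option carries over. None of these points is deep individually, but all three have to be checked because the theorem is stated so that any combination of the optional side-conditions on $H$ can be assumed.
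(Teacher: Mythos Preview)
Your proof is correct and follows essentially the same approach as the paper: both build the bilinear polynomial $f=\sum y_j H_{n+1-j}$ (the paper writes it as $y\tp F$ and then substitutes $y\rv$), take the reversed gradient to obtain a $2n$-dimensional map $\tilde F=(x,y)+\tilde H$ with $\jac\tilde H=(\hess f)\rv$ antidiagonally symmetric and block lower triangular, and recover the inverse of $F$ by setting $y=0$. The only cosmetic difference is that the paper folds the identity part into $f$ by using $F$ rather than $H$, whereas you add $(x,y)$ separately; the resulting maps $\tilde F$ coincide.
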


\begin{proof}
Assume $F = x + H \in K[x]^n$ such that $\jac H$ has symmetry $\jc$.
Put $f := y\tp F = \sum_{i=1}^n y_i F_i$. Then 
$\hess_{x,y} f(x,y\rv)$  has the regular (Hessian) symmetry $\sjc$, whence 
$$
\hess_{x,y}\rv f(x,y\rv) 
\stackrel{\eqref{hrev}}= \jac_{x,y}\rv \grad_{x,y} f (x,y\rv)
\stackrel{\eqref{jrev}}= \jac_{x,y} \grad_{x,y}\rv f (x,y\rv)
\stackrel{\eqref{grev}}= \jac_{x,y} \grad_{y\rv,x\rv} f(x,y\rv)
$$
has symmetry $\rsjc$. Since the first
$n$ components of $\grad_{y\rv,x\rv} f(x,y\rv)$ are equal to 
$\grad_{y\rv} (y\rv)\tp F = F$, we see that $\jac_{x,y} 
\grad_{y\rv,x\rv} f(x,y\rv)$ is of the form
$$
\jac_{x,y} \grad_{y\rv,x\rv} f(x,y\rv) = \left( \begin{array}{cc} 
    \jac F & {\mathbf 0} \\
    * & (((\jac F)\rv)\tp)\rv
\end{array} \right) = \left( \ifUseEepic \begin{array}{cc} 
    \jac F & {\mathbf 0} \\ 
    * & \flipr{\!\!\!\jac F~} 
\end {array} \else \begin{array}{cc}
    \jac F & {\mathbf 0} \\ 
    \begin{tikzpicture}[x=1pt,y=1pt] \useasboundingbox (-5,-5) rectangle (5,10);           
    \path (0,0) node{$*$}; \end{tikzpicture} &
    \begin{tikzpicture}[x=1pt,y=1pt] \useasboundingbox (-5,-5) rectangle (5,10);           
    \path (0,0) node[xscale=-1,rotate=90]{$\jac F$}; \end{tikzpicture}
\end{array} \fi \right)
$$
where the zero submatrix right above appears since $\deg_y f < 2$, and the part 
$y\tp x$ of $f$ does not affect $*$ because $\deg_x (y\tp x) < 2$. 
Thus $*$ only depends on $y\tp H$.

Hence $\grad_{y\rv,x\rv} f(x,y\rv)$ is of the form $\tilde{F} = (x,y) + \tilde{H}$ 
where $\jac_{x,y} \tilde{H}$ has symmetry $\rsjc$ and the properties
to be chosen by the reader of $\tilde{H}$ correspond to those of $H$. Thus if we assume 
$\rsjc(K,2n)$, then $\grad_{y\rv,x\rv} f(x,y\rv)$ satisfies the Jacobian Conjecture.
Since substituting $y = 0$ in $\grad_{y\rv,x\rv} f(x,y\rv)$ gives $(F,0)$, we see that 
$F$ satisfies the Jacobian conjecture as well. This gives the desired result.
\end{proof}

\noindent
In \cite{meng}, the author G. Meng constructs the map $f := \sum_{i=1}^n y_i F_i$
in the above proof. The corresponding gradient map $\grad_{x,y} f$
has symmetry $\sjc$, but its linear part is $(y,x)$
in case $F$ has linear part $x$. In order to restore the
linear part to $(x,y)$, we composed $\grad_{x,y} f$ with linear maps in the above 
proof, resulting $\grad_{y\rv,x\rv} f(x,y\rv)$ with linear part $(x,y)$ and 
symmetry $\rsjc$. That is why the above theorem is considered to be due to 
Meng. The case that $\jac H$ is nilpotent of corollary \ref{hessred} below was proved 
in \cite{art6}.

\begin{theorem} \label{hessequiv}
$\sjc(\C,N)$ and $\rsjc(\C,N)$ are equivalent.
\end{theorem}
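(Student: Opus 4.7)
My plan is to exhibit an explicit linear conjugation over $\C$ that bijects $N$-dimensional polynomial maps $F=x+H$ with diagonally symmetric $\jac H$ onto those with anti-diagonally symmetric $\jac H$, carrying every optional hypothesis across. Both directions of the claimed equivalence then follow at once by conjugating, applying the assumed instance of the Jacobian Conjecture, and conjugating back.

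The algebraic core is: if $U\in\GL_N(\C)$ satisfies $UU\tp = J$ (with $J:=I_N\rv$, so $J^2=I$), then $U^{-1}=U\tp J$, and a short calculation yields $J(UMU^{-1})\tp J = UM\tp U^{-1}$ for every matrix $M$. Consequently $UMU^{-1}$ is anti-diagonally symmetric ($M = JM\tp J$) if and only if $M$ is diagonally symmetric ($M = M\tp$). So conjugation by such a $U$ swaps the two symmetries.

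Producing such a $U$ over $\C$ is the only step where the ground field enters. I would diagonalise the real symmetric matrix $J$ as $J=ODO\tp$ with $O$ real orthogonal and $D$ diagonal with entries in $\{\pm 1\}$, pick a diagonal $D'\in\GL_N(\C)$ with entries in $\{1,i\}$ satisfying $(D')^2=D$, and set $U:=OD'$. Then $UU\tp = O(D')^2 O\tp = ODO\tp = J$. Over $\R$ this step is impossible, since $UU\tp$ would have to be positive semidefinite while $J$ has eigenvalue $-1$; that is precisely why the statement is restricted to $K=\C$.

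With $U$ fixed, define $\tilde F(x):=UF(U^{-1}x)$, so $\tilde H(x)=UH(U^{-1}x)$ and $\jac\tilde H(x)=U\,(\jac H)(U^{-1}x)\,U^{-1}$. The key identity shows $\jac\tilde H$ has anti-diagonal symmetry iff $\jac H$ has diagonal symmetry. The assignment $F\leftrightarrow\tilde F$ is a polynomial-map isomorphism (so invertibility transfers); $\det \jac\tilde F(x)=\det (\jac F)(U^{-1}x)$ (so the Keller condition transfers in both directions); linear substitution and left-multiplication by $U$ preserve the set of degrees occurring in $H$; and $\jac\tilde H$ is similar to $(\jac H)(U^{-1}x)$, so nilpotency transfers as well. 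Running this bijection in one direction reduces $\rsjc(\C,N)$ to $\sjc(\C,N)$, and in the other direction reduces $\sjc(\C,N)$ to $\rsjc(\C,N)$. The only genuine obstacle in the whole argument is the existence of $U$ with $UU\tp = J$ over $\C$, and once that is settled the rest is routine verification.
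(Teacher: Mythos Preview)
Your proof is correct and follows essentially the same approach as the paper: both establish the equivalence via a single linear conjugation over $\C$ that exchanges diagonal and anti-diagonal Jacobian symmetry while preserving the Keller condition, degree set, and nilpotency. The paper writes down the conjugating matrix explicitly as $T=\tfrac{1}{\sqrt2}(I_N+\I\,I_N\rv)$ (which satisfies $TT\tp=\I\,I_N\rv$, a scalar multiple of your condition) and checks the swap by hand, whereas you isolate the governing identity $UU\tp=I_N\rv$ and construct $U$ by diagonalising $I_N\rv$; the underlying mechanism is identical.
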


\begin{proof}
Notice that it suffices to show that polynomial maps $H \in \C[X]^N$ with Jacobian symmetry
$\sjc$ can be transformed to polynomial maps $\tilde{H} \in \C[X]^N$ with Jacobian symmetry
$\rsjc$ by way of linear conjugation, and vice versa.
We shall show that this is the case, where the conjugation map has the symmetric
unitary Jacobian $T := \frac12 \sqrt{2} (I_N + \I I_N\rv)$.
\begin{description}

\item[(\RightArroW):] 
Let $F = X + H \in \C[X]^N$ such that $\jac_{X} H$ has symmetry $\sjc$, where 
$X = (x_1, x_2, \ldots, x_N)$. Since $T$ is symmetric, we see that $TH(TX)$ has the regular
Jacobian symmetry $\sjc$ as well. But $T^{-1} = \frac12 \sqrt{2} (I_N - \I I_N\rv) = -\I T\rv$, 
thus by \eqref{mrev}, $T^{-1}H(TX) = -\I(TH(TX))\rv$ has Jacobian symmetry $\rsjc$.
Since conjugations preserve the identity part $X$ of $F$, we see that $\tilde{F} := T^{-1}F(TX) = X + 
T^{-1}H(TX)$, so $\tilde{F} - X = T^{-1}H(TX)$ has Jacobian symmetry $\rsjc$.

\item[(\LeftArroW):] 
Let $F = X + H \in \C[X]^N$ such that $\jac_{X} H$ has symmetry $\rsjc$, where 
$X = (x_1, x_2, \ldots, x_N)$. Then $\jac_X H\rv$ has 
symmetry $\sjc$. Since $H\rv = I_N\rv H$ and $I_N\rv$ commutes with $T$, we obtain by \eqref{mrev}
and by symmetry of $T$ that
$$
I_N\rv T^{-1}H(TX) = T^{-1}H\rv(TX) = -\I T\rv H\rv (TX) = -\I(TH\rv(TX))\rv
$$ 
has Jacobian symmetry $\rsjc$. 
Thus $T^{-1}H(TX)$ has Jacobian symmetry $\sjc$ on account of \eqref{mrev}. 
Since conjugations preserve the identity part $X$ of $F$, we see that $\tilde{F} := T^{-1}F(TX) = X + 
T^{-1}H(TX)$, so $\tilde{F} - X = T^{-1}H(TX)$ has Jacobian symmetry $\sjc$. \qedhere
 
\end{description}
\end{proof}

\begin{corollary} \label{hessred}
$\sjc(\C,2n)$ implies $\jc(\C,n)$.
\end{corollary}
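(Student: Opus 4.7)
The plan is to simply chain together the two previous results. By Theorem \ref{hessequiv} applied with $N = 2n$, the statements $\sjc(\C,2n)$ and $\rsjc(\C,2n)$ are equivalent. So assuming $\sjc(\C,2n)$, we obtain $\rsjc(\C,2n)$. Then Theorem \ref{menghess} (Meng's theorem), applied with $K = \C$, gives us $\jc(\C,n)$.

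I do not expect any real obstacle here: the hard work has already been done in the two preceding theorems. The one small thing I would double-check is that the optional side conditions (degrees in $S$, $\det \jac H = 0$, $(\jac H)^n = 0$) are preserved through both steps. For Theorem \ref{hessequiv} this is clear because the transformation is linear conjugation $H \mapsto T^{-1} H(TX)$, which preserves homogeneity degree, hence preserves the set $S$ of degrees of terms, and which preserves singularity/nilpotency of the Jacobian (since $\jac_X (T^{-1} H(TX)) = T^{-1} (\jac H)(TX) T$ is conjugate to $(\jac H)(TX)$). For Theorem \ref{menghess} the preservation of these conditions is explicitly noted in its proof, where the author remarks that the properties of $\tilde{H}$ correspond to those of $H$.

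Thus the corollary is a one-line consequence of the preceding two theorems, and the proof proposal is simply:
\begin{equation*}
\sjc(\C,2n) \;\stackrel{\ref{hessequiv}}{\Longleftrightarrow}\; \rsjc(\C,2n) \;\stackrel{\ref{menghess}}{\Longrightarrow}\; \jc(\C,n). \qedhere
\end{equation*}
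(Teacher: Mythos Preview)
Your proposal is correct and follows exactly the same route as the paper: the paper's proof is the single sentence ``This follows from theorem \ref{hessequiv} and theorem \ref{menghess}.'' Your additional check that the optional side conditions (degree set $S$, singularity/nilpotency of $\jac H$) are preserved under the linear conjugation of Theorem \ref{hessequiv} is a nice bit of diligence beyond what the paper spells out.
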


\begin{proof}
This follows from theorem \ref{hessequiv} and theorem \ref{menghess}.
\end{proof}

\begin{theorem}
Assume $K$ is a field of characteristic zero. Then $\rsjc(K,2n)$, $\dsjc(K,2n)$ 
and $\dsjc(K,2n+1)$ are equivalent. 
\end{theorem}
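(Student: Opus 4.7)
The plan is to establish the cycle $\rsjc(K,2n) \Rightarrow \dsjc(K,2n) \Rightarrow \dsjc(K,2n+1) \Rightarrow \rsjc(K,2n)$, of which the first two implications are routine and the third one carries the real content.

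For $\rsjc(K,2n) \Rightarrow \dsjc(K,2n)$ I would simply observe that every $\dsjc$-symmetric matrix is in particular $\rsjc$-symmetric (the anti-diagonal symmetry is one of the conditions encoded in the $\dsjc$ picture), so $\dsjc$-maps form a subclass of $\rsjc$-maps and the implication is immediate. For $\dsjc(K,2n) \Leftrightarrow \dsjc(K,2n+1)$ I would invoke the observation already recorded after the definition of $\dsjc$: a $(2n+1)$-dimensional $\dsjc$-symmetric matrix has its middle row and middle column equal to zero. Consequently a $(2n+1)$-dimensional $\dsjc$-map $\tilde F = \tilde x + \tilde H$ has $\tilde H_{n+1}$ constant (which may be taken to be $0$) and $\tilde H$ independent of $\tilde x_{n+1}$; deleting the middle variable and middle component gives a $2n$-dim $\dsjc$-map of equivalent invertibility, and conversely padding a $2n$-dim $\dsjc$-map with a zero middle row and column yields a $(2n+1)$-dim $\dsjc$-map of equivalent invertibility. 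The optional side conditions in the definition of $\jc$ (allowed degree set $S$; singularity or nilpotency of $\jac H$) are preserved throughout.

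The substantive step is $\dsjc(K,2n+1) \Rightarrow \rsjc(K,2n)$. Given an $\rsjc$-map $F = x + H$ of dimension $2n$, I would write $x = (u,v)$ and $H = (H_1,H_2)$ as $n$-tuples, and use that $H\rv = \grad g$ for some scalar polynomial $g$, which is an equivalent reformulation of the $\rsjc$ symmetry of $\jac H$. From $g$ I would construct a scalar $\tilde f$ on the $(2n+1)$-dimensional space $(\tilde u, \tilde x_{n+1}, \tilde v)$ by combining $g$ with sign-flipped and reflected copies, and define $\tilde F = \tilde x + \tilde H$ via a gradient-and-reversal formula in the spirit of $\grad_{y\rv,x\rv} f(x,y\rv)$ from the proof of Theorem~\ref{menghess}. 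The aim is that $\jac \tilde H$ exhibits the full $\dsjc$ block pattern in dimension $2n+1$: a zero middle row and column, symmetric top-left and bottom-right blocks, the antisymmetric coupling $\jac_{\tilde v}\tilde H_1 = -(\jac_{\tilde u}\tilde H_2)\tp$ between the off-diagonal blocks, and overall anti-diagonal symmetry. A specific substitution (for example setting the newly introduced middle variable and the additional dummy variables to zero) should then recover $F$ from a subset of the components of $\tilde F$, so that invertibility of $\tilde F$ forces invertibility of $F$ exactly as in Meng's argument.

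The main obstacle I expect is in designing $\tilde f$ so that $\jac\tilde H$ simultaneously satisfies every component of the $\dsjc$ symmetry pattern, so that the optional degree and nilpotency conditions on $H$ transfer correctly to $\tilde H$, and so that $F$ can actually be read off from $\tilde F$. The verification that $\jac \tilde H$ is $\dsjc$-symmetric should reduce to direct computations with partial derivatives of $g$, exploiting the three block relations imposed by the $\rsjc$ symmetry of $\jac H$; the degree and nilpotency conditions should transfer because the construction is polynomial and linear in the derivatives of $g$.
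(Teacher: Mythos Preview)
Your argument rests on a misreading of the symmetry $\dsjc$. You treat it as a strengthening of the anti-diagonal symmetry $\rsjc$, but the axis of $\dsjc$ is the \emph{main} diagonal: in block form for dimension $2n$, a matrix $\begin{pmatrix} A & B \\ C & D \end{pmatrix}$ has \antisym\ $\dsjc$ precisely when $A = A\tp$, $D = D\tp$ and $C = -B\tp$. This is not a special case of $\rsjc$ (which in block form relates $A$ to the anti-transpose of $D$, and forces $B$ and $C$ each to be anti-diagonally symmetric). So your ``immediate'' implication $\rsjc(K,2n) \Rightarrow \dsjc(K,2n)$ fails, and with it the whole cycle.

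The paper's proof does not use any inclusion of classes, nor a Meng-style dimension-raising construction. Instead it exhibits an explicit linear conjugation in dimension $2n$: with
\[
T = \begin{pmatrix} I_n & I_n\rv \\ -I_n\rv & I_n \end{pmatrix},
\]
one checks that $H \mapsto T^{-1} H(T\,\cdot)$ carries maps with Jacobian symmetry $\rsjc$ to maps with Jacobian \antisym\ $\dsjc$, and vice versa. This is the real content of the theorem; your proposed third step (building a $(2n{+}1)$-dimensional $\dsjc$-map from an $\rsjc$-map via a scalar potential) is both unnecessary and aimed at the wrong target, since the equivalence already holds in dimension $2n$. Your stabilization argument for $\dsjc(K,2n) \Leftrightarrow \dsjc(K,2n+1)$ is fine and matches the paper.
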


\begin{proof}
The equivalence of $\dsjc(K,2n)$ 
and $\dsjc(K,2n+1)$ follows by stabilization: both the row and column in the middle of
square matrices of odd dimension with \antisym\ $\dsjc$ are zero. Thus 
$\rsjc(K,2n) \Leftrightarrow \dsjc(K,2n)$ remains to be proved. 

Notice that it suffices to show that polynomial maps $H \in K[x,y]^{2n}$ with Jacobian symmetry
$\rsjc$ can be transformed to polynomial maps $\tilde{H} \in K[x,y]^{2n}$ with Jacobian symmetry
$\dsjc$ by way of linear conjugation, and vice versa.
We shall show that this is the case, where the conjugation map has the Jacobian
$$
T := \left( \begin{array}{cc} 
     I_n & I_n\rv \\ -I_n\rv & I_n \end{array} \right)
$$
Notice that $T\tp T = 2I_{2n}$ and $T\tp = I_{2n}\rv T I_{2n}\rv$. 
Thus $2T^{-1} =  T\tp = I_{2n}\rv T I_{2n}\rv$.
\begin{description}

\item[(\RightArroW):] 
Let $F = (x,y) + H \in K[x,y]^n$ such that $\jac_{x,y} H$ has symmetry $\rsjc$. 
Then $H\rv$ and hence also
$$
T\tp H\rv(T(x,y)) \stackrel{\eqref{mrev}}= 
T\tp I_{2n}\rv H(T(x,y)) = 2T^{-1} I_{2n}\rv H(T(x,y))
$$ 
has the regular Jacobian symmetry $\sjc$. Since negating the upper half of 
$2T^{-1} = T\tp$ has the same effect as reversing its columns, which is what
$I_{2n}\rv$ does in the product $2T^{-1} I_{2n}\rv$, we have that
$$
2T^{-1} I_{2n}\rv H(T(x,y)) =  2 J_{2n} T^{-1} H(T(x,y))
$$
where
$$
J_{2n} := \left( \begin{array}{cc} -I_n & {\mathbf 0} \\ {\mathbf 0} & I_n \end{array} \right)
$$
Combining the multiplication by $2J_{2n}$ 
with the regular Jacobian symmetry $\sjc$, we see that
$T^{-1} H(T(x,y))$ has Jacobian \antisym\ $\dsjc$. Since conjugations 
preserve the identity part $(x,y)$ of $F$, we see that $\tilde{F} := T^{-1}F(T(x,y)) = (x,y) + 
T^{-1}H(T(x,y))$, so $\tilde{F} - (x,y) = T^{-1}H(T(x,y))$ has Jacobian \antisym\ $\dsjc$.

\item[(\LeftArroW):] 
Let $F = (x,y) + H \in K[x,y]^n$ such that $\jac_{x,y} H$ has \antisym\ $\dsjc$. 
By negating the right half of $\jac_{x,y} H$, we see that $H(x,-y) = H(-J_{2n}(x,y))$ and hence also 
$M\tp H\big(-J_{2n} M(x,y)\big)$ has Jacobian symmetry $\sjc$ for each
$M \in \GL_{2n}(K)$.
In particular
$$
T\tp (-J_{2n}\tp) H\big( (-J_{2n})^2 T (x,y)\big) 
= -T\tp J_{2n} H\big( T (x,y)\big) 
$$ 
has Jacobian symmetry $\sjc$. Since negating the right half of 
$T\tp = 2 T^{-1}$ has the same effect as reversing the order of its rows, we have that 
$$
T\tp (-J_{2n}\tp) H\big( T (x,y)\big) 
= (2T^{-1})\rv H\big( T (x,y)\big) 
\stackrel{\eqref{mrev}}= \Big(2T^{-1} H\big(T(x,y)\big)\Big)\rv
$$
Thus $T^{-1} H(T(x,y))$ 
has Jacobian symmetry $\rsjc$. Since conjugations 
preserve the identity part $(x,y)$ of $F$, we see that $\tilde{F} := T^{-1}F(T(x,y)) = (x,y) + 
T^{-1}H(T(x,y))$, so $\tilde{F} - (x,y) = T^{-1}H(T(x,y))$ has Jacobian symmetry $\rsjc$. \qedhere
 
\end{description}
\end{proof}

\begin{corollary}[Dru{\.z}kowski] \label{druzhess}
Assume $K$ is a field of characteristic zero. Then $\dsjc(K,2n)$ implies $\jc(K,n)$. 
\end{corollary}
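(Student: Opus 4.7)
The plan is to simply chain together the two immediately preceding results. The theorem just proved establishes the equivalence of $\rsjc(K,2n)$, $\dsjc(K,2n)$ and $\dsjc(K,2n+1)$, while Meng's theorem (Theorem \ref{menghess}) gives $\rsjc(K,2n) \Rightarrow \jc(K,n)$ for $K$ of characteristic zero.

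So I would argue as follows. Assume $\dsjc(K,2n)$. By the equivalence established in the preceding theorem, this is equivalent to $\rsjc(K,2n)$. Then Theorem \ref{menghess} immediately yields $\jc(K,n)$.

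There is essentially no obstacle here; the corollary is a one-line composition of the two results above it, and the only thing worth noting is the hypothesis that $\mathrm{char}\,K = 0$ is inherited from Meng's theorem (while the equivalence theorem does not need it). Thus the proof is a single chain of implications and no further construction is required.
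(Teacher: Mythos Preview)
Your proof is correct and matches the paper's own argument essentially verbatim: the paper simply says ``This follows immediately from the above theorem and theorem \ref{menghess}.'' One small inaccuracy in your commentary: the preceding equivalence theorem \emph{does} also assume $\operatorname{char} K = 0$, so the hypothesis is inherited from both results, not just from Meng's theorem.
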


\begin{proof}
This follows immediately from the above theorem and
theorem \ref{menghess}.
\end{proof}

\noindent
In fact, Dru{\.z}kowski considers maps with \antisym\ $\sjc$,
but linear part $(-x,y) = J_{2n} (x,y)$ in \cite{druzsym}. Negating the first half of the map
restores the linear part, and the \antisym\ becomes $\dsjc$.

\subsection*{Symmetry patterns that satisfy the Jacobian Conjecture}

In some cases, the Jacobian Conjecture holds for polynomial maps $F = x + H$ with 
certain \antisyms\ of $\jac H$, because $F$ appears to be linear.

\begin{theorem}
Assume $F = x+H$ is a Keller map over $\R$, such that $\jac H$ has regular
symmetry $\sjc$. If either $H$ has no linear terms or $\jac H$ is nilpotent,
then $H$ is constant. 
In particular, $F$ is invertible because $F$ is translation in that case.
\end{theorem}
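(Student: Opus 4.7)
My plan is to exploit the symmetry of $\jac H$ to rewrite $F$ as the gradient of a strictly convex polynomial, and then apply a classical rigidity theorem. Since $\jac H$ is symmetric, Poincar\'e's lemma yields a polynomial $g \in \R[x]$ with $H = \grad g$; setting $\phi := g + \tfrac12 x\tp x$, one has $F = \grad \phi$ and $\jac F = \hess \phi$, so the Keller condition becomes $\det \hess \phi \equiv c$ for some $c \in \R^*$.

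For the nilpotent case I would finish right away: if $(\jac H)^n = 0$, then at every point $x \in \R^n$ the real symmetric matrix $\jac H(x)$ is nilpotent, hence zero, since it is diagonalizable with only zero eigenvalues. Thus $\jac H \equiv 0$ and $H$ is constant.

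For the no-linear-terms case, the strategy is first to argue that $\hess \phi$ is positive definite everywhere on $\R^n$. Since $H$ has no linear terms, $\jac H(0) = 0$, so $\hess \phi(0) = I_n$ and $c = 1$. The eigenvalues of the symmetric matrix $\hess \phi(x)$ are continuous in $x$ and never zero (as $\det \equiv 1$), so by connectedness of $\R^n$ they remain positive everywhere, being all equal to $+1$ at the origin. Hence $\phi$ is a strictly convex polynomial on $\R^n$ with constant Hessian determinant, and the J\"orgens--Calabi--Pogorelov theorem forces $\phi$ to be a quadratic polynomial. Then $F = \grad \phi$ is affine, so $H = F - x$ is affine, and the absence of linear terms makes $H$ constant.

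The main obstacle is the appeal to J\"orgens--Calabi--Pogorelov in the second case, which is the sole non-elementary ingredient. A purely algebraic substitute would analyze the top-degree homogeneous part $\phi^{(d)}$ of $\phi$: its Hessian is positive semi-definite everywhere (as a rescaled limit of $\hess \phi$ along rays) and has identically vanishing determinant (the top-degree component of the identity $\det \hess \phi \equiv 1$). For $d \ge 3$ one would then need to extract a contradiction from this degeneracy combined with strict convexity; this looks delicate, so citing JCP is the cleaner path.
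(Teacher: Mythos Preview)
Your proof is correct. For the nilpotent case, your argument and the paper's are the same observation in two wrappings: you invoke the spectral theorem (a real symmetric nilpotent matrix is diagonalizable with all eigenvalues zero, hence zero), while the paper writes $0 = (\jac H)^{2r-2} = (\jac H)^{r-1}\big((\jac H)^{r-1}\big)\tp$ and uses that $MM\tp = 0$ over $\R$ forces $M = 0$. Either way one concludes $\jac H \equiv 0$ pointwise.

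For the no-linear-terms case, the paper simply cites an external corollary, whereas you spell out the underlying mechanism: write $F = \grad\phi$, use connectedness of $\R^n$ together with $\det\hess\phi \equiv 1$ to propagate positive definiteness from the origin, and then invoke J\"orgens--Calabi--Pogorelov. This is exactly the classical route (going back to Dillen's 1991 paper on polynomials with constant Hessian determinant), so you are essentially unpacking what lies behind the citation rather than taking a genuinely different path. Your honest acknowledgment that JCP is the heavy ingredient is appropriate; the ``purely algebraic substitute'' you sketch at the end (analyzing the top-degree part $\phi^{(d)}$) is indeed delicate and, as you suspect, does not obviously close without further ideas, so deferring to JCP is the right call.
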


\begin{proof}
The case that $H$ has no linear terms follows from \cite[Cor.\@ 4.4]{dillenext}, so
assume that $(\jac H)^r = 0$ and $(\jac H)^{r-1} \ne 0$. If $r \ge 2$, then
$$
0=(\jac H)^{2r-2} = (\jac H)^{r-1}\cdot(\jac H)^{r-1} 
= (\jac H)^{r-1}\cdot\big((\jac H)^{r-1}\big)\tp 
$$
Substituting generic reals in the variables in the
rows of $(\jac H)^{r-1}$, we obtain rows of real numbers
that are isotropic (self-orthogonal), and hence zero. 
Contradiction, so $r=1$ and $\jac H=0$. Thus $F = x + H$ is a translation.
\end{proof}

\begin{theorem} \label{asymjc}
Assume $K$ is a field of characteristic zero. 
Then $\asjc(K,n)$ and $\rasjc(K,n)$ have affirmative answers.
In particular, if the Jacobian of a polynomial map $H$ over $K$ has antisymmetry 
$\asjc$ or $\rasjc$, then $\deg H \le 1$.
\end{theorem}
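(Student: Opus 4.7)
The statement that $\deg H \le 1$ is the entire content of the theorem: once it is proved, $F = x + H$ is an affine map whose Jacobian $\jac F = I + \jac H$ is a constant matrix, and the Keller hypothesis forces this matrix to be invertible, so $F$ is a bijective affine map. Accordingly, the plan is to establish the degree bound in each of the two antisymmetry cases.

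First, I would reduce the $\rasjc$ case to the $\asjc$ case using \eqref{jrev}. That identity gives $\jac(H\rv) = I_n\rv \jac H$. The $\rasjc$ antisymmetry of $\jac H$ is the relation $I_n\rv (\jac H)\tp I_n\rv = -\jac H$; left-multiplying by $I_n\rv$ gives $(\jac H)\tp I_n\rv = -I_n\rv \jac H$, i.e., $\bigl(I_n\rv \jac H\bigr)\tp = -\bigl(I_n\rv \jac H\bigr)$. So $\jac(H\rv)$ is ordinarily antisymmetric, and since $\deg H\rv = \deg H$, it suffices to prove the theorem assuming $\jac H = -(\jac H)\tp$.

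For the $\asjc$ case, decompose $H$ into homogeneous components; each component inherits the antisymmetric Jacobian property, so without loss of generality $H$ is homogeneous of some degree $d$. Introduce the auxiliary polynomial $g := \sum_j x_j H_j$, which is homogeneous of degree $d+1$. Differentiating term by term and using both the antisymmetry $\partial H_j/\partial x_i = -\partial H_i/\partial x_j$ and Euler's identity applied to $H_i$ yields
\[
\partial g/\partial x_i \;=\; H_i + \sum_j x_j\, \partial H_j/\partial x_i \;=\; H_i - \sum_j x_j\, \partial H_i/\partial x_j \;=\; H_i - d H_i \;=\; -(d-1) H_i.
\]
Applying Euler's identity a second time, now to $g$ itself, gives $(d+1) g = \sum_i x_i\, \partial g/\partial x_i = -(d-1) g$, so $2d\,g = 0$. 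In characteristic zero and for $d \ge 1$, this forces $g = 0$, hence $(d-1) H_i = 0$ for all $i$, so $H = 0$ whenever $d \ge 2$. Thus only the constant and linear homogeneous parts of $H$ can be nonzero, giving $\deg H \le 1$.

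The argument is elementary and has no deep obstacle. The only bit demanding care is the bookkeeping in the $\rasjc \Rightarrow \asjc$ reduction, which is a short manipulation of $I_n\rv$ and $\tp$, and the verification that $g = 0$ is automatic (but harmless) in the trivial cases $d = 0, 1$, where the conclusion $H = 0$ is not claimed anyway.
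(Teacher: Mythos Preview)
Your proof is correct, but the paper takes a shorter and more direct route. Instead of decomposing into homogeneous components and introducing the auxiliary polynomial $g = x\tp H$, the paper simply applies the antisymmetry relation $\partial_j H_k = -\partial_k H_j$ three times in a cycle to obtain
\[
\partial_i \partial_j H_k \;=\; -\,\partial_i \partial_k H_j \;=\; \partial_j \partial_k H_i \;=\; -\,\partial_i \partial_j H_k,
\]
so $2\,\partial_i \partial_j H_k = 0$ and hence all second partials vanish in characteristic zero. This one-line argument needs neither Euler's identity nor any homogeneity assumption, and it handles the $\rasjc$ case ``similarly'' (your explicit reduction of $\rasjc$ to $\asjc$ via $H\mapsto H\rv$ is a clean way to make that remark precise). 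Your Euler-identity argument is a pleasant alternative --- it packages the same cancellation into the single scalar equation $2d\,g = 0$ --- but it is slightly longer and requires the preliminary reduction to homogeneous $H$.
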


\begin{proof}
Assume that $H$ is a polynomial map over $K$ with Jacobian antisymmetry $\asjc$.
(The proof for Jacobian antisymmetry $\rasjc$ will be similar.) Then
$$
\parder{}{x_i}\parder{}{x_j} H_k = - \parder{}{x_i}\parder{}{x_k} H_j 
= \parder{}{x_j} \parder{}{x_k} H_i = -\parder{}{x_i}\parder{}{x_j} H_k
$$
and hence $2 \parder{}{x_i}\parder{}{x_j} H_k = 0$, for all $i,j,k$. So
$\deg H \le 1$. 
\end{proof}

\begin{definition}
$\crjc(K,n)$ means that the Jacobian conjecture is
satisfied for polynomial maps $F$ over $K$ that have a symmetric Jacobian 
with respect to both the diagonal and the anti-diagonal, where $H$ has the same
partially chosen properties as in the definition of $\jc(K,n)$.

In the definitions of $\crudjc(K,n)$, $\crurjc(K,n)$ and $\crrdjc(K,n)$,
some `symmetries' are antisymmetries, namely when colors on opposite sides
of the symmetry axis do not match.

In the definitions of $\crujc(K,n)$, $\crrjc(K,n)$, $\crdjc(K,n)$ 
and $\crljc(K,n)$, the `symmetries' are partially antisymmetries. 
\end{definition}

\noindent
Notice that $\crudjc(K,n)$, $\crurjc(K,n)$ and $\crrdjc(K,n)$ have affirmative
answers as well as $\asjc(K,n)$ and $\rasjc(K,n)$, because
the corresponding \antisyms\ are stronger than at least one of those of
$\asjc$ and $\rasjc$ in theorem \ref{asymjc}. 

\begin{theorem}
Assume $K$ is a field of characteristic zero. Then $\crujc(K,n)$, 
$\crrjc(K,n)$, $\crdjc(K,n)$ and $\crljc(K,n)$ have affirmative answers.
In particular, if the Jacobian of a polynomial map $H$ over $K$ has \antisym\ 
$\crujc$, $\crrjc$, $\crdjc$ or $\crljc$, then $\deg H \le 1$.
\end{theorem}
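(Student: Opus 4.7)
The plan is to adapt the chain argument from Theorem~\ref{asymjc}. By the rotational symmetry among $\crujc$, $\crrjc$, $\crdjc$, $\crljc$ (the four pictures are related by $90^{\circ}$ rotations of the distinguished triangle, and the argument translates with ``small'' and ``large'' pair sums swapped accordingly), the four cases are analogous, so I write out the argument for $\crujc$.

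Let $A=\jac H$ have $\crujc$-symmetry. This gives, for $i\ne j$ with $i+j\ne n+1$:
\begin{itemize}
\item[(T)] $a_{ij} = \sigma(i,j)\,a_{ji}$ with $\sigma(i,j)=-1$ iff $i+j<n+1$,
\item[(A)] $a_{ij} = \sigma'(i,j)\,a_{n+1-j,\,n+1-i}$ with $\sigma'(i,j)=-1$ iff $i<j$;
\end{itemize}
and on the boundaries $a_{ii}=a_{i,\,n+1-i}=0$ for all $i$.

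I aim to show $\partial_i\partial_j H_k=0$ for every $(i,j,k)$, from which $\deg H\le 1$ follows. The degenerate cases are easy: if two of $i,j,k$ coincide, then some $\partial_m H_m=a_{mm}=0$ appears after a (T)-swap; and if any pair of $\{i,j,k\}$ sums to $n+1$, the relevant entry of $\jac H$ already vanishes (possibly after using (T) together with Jacobian integrability $\partial_k a_{ij}=\partial_j a_{ik}$), so the mixed partial is zero.

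For the generic triples (three distinct indices, no pair summing to $n+1$), applying (T) three times as in Theorem~\ref{asymjc} yields
\[
\partial_i\partial_j H_k \;=\; \sigma(i,j)\,\sigma(j,k)\,\sigma(i,k)\,\partial_i\partial_j H_k,
\]
which gives zero whenever the sign product is $-1$. Otherwise the product is $+1$, and I insert one application of (A) into the chain. Among three distinct indices at least two are ``extreme'' (equal to $\min$ or $\max$ of $\{i,j,k\}$) and at most one is the ``middle''; applying (A) to a pair whose complementary index is extreme---using one preliminary (T)-swap if that pair does not already include the function index $k$---transforms the expression to a new one whose (T)-chain sign product is $-1$. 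A direct check (splitting into the subcases where all three pair sums exceed $n+1$ versus where exactly one does) confirms that this sign always comes out right, and that such a pair always exists and is reachable by at most one (T)-swap.

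The main obstacle is the sign bookkeeping in this combined (T)+(A) chain. It reduces to a parity check: (A) on a pair $(\alpha,\beta)$ replaces $\alpha+\beta$ by $2(n+1)-(\alpha+\beta)$ (flipping one $\sigma$) and permutes the remaining two $\sigma$-values according to the ordering of the new indices, so choosing the (A)-pair whose complement is extreme precisely flips the overall parity. Once $\partial_i\partial_j H_k=0$ for all triples, $H$ is affine, so $F=x+H$ is linear and hence invertible, confirming the Jacobian Conjecture.
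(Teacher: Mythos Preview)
Your strategy is correct and takes a genuinely different route from the paper. The paper argues by an inductive ``border-peeling'': it observes that above the anti-diagonal the matrix is antisymmetric in the ordinary sense, so the $\asjc$-chain with $i=1$ stays in that region and shows that $x_1$ cannot occur there; the anti-diagonal reflection then spreads this to all of $\jac H$, forcing the first column---and hence, by the symmetries, the whole border---to be constant. Stripping the border and inducting on $n$ finishes. You instead attack every triple $(i,j,k)$ in one shot, inserting a single anti-diagonal move (A) into the three-step (T)-cycle whenever the pure (T)-sign is $+1$. Your approach is more uniform and avoids the induction; the paper's approach is more structural and makes visible the intermediate fact that the outer frame of $\jac H$ is already constant.

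One correction to your bookkeeping: the heuristic in your final paragraph is not accurate. In the sub-case where exactly two of the three pair sums lie below $n+1$ (so, writing $p<q<r$ for the triple, $p+q,\,p+r<n+1<q+r$), choosing the complement to be the \emph{maximum} $r$ does \emph{not} flip the parity: the resulting triple $\{r,\,n{+}1{-}p,\,n{+}1{-}q\}$ has all three pair sums $>n+1$, so its (T)-sign is again $+1$. What works uniformly in both even-parity sub-cases is to take the complement equal to the \emph{minimum} $p$: the new triple $\{p,\,n{+}1{-}q,\,n{+}1{-}r\}$ then has all three pair sums $<n+1$ (the first two because $p<q$ and $p<r$, the third because $q+r>n+1$ in both sub-cases), giving (T)-sign $-1$. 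So the ``direct check'' you defer to is indeed trivial, but the rule should be ``complement $=\min$'' rather than ``complement extreme''.
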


\begin{proof}
Assume that $H$ is a polynomial map over $K$ with Jacobian \antisym\ 
$\crujc$ or $\crljc$. (The proof for $\crrjc$ and $\crdjc$ will be similar).
We show that $\deg H \le 1$. 

For that purpose, notice that above the anti-diagonal,
$\jac H$ is anti-symmetric with respect to the diagonal. Now the 
assumption that $x_1$ appears above the anti-diagonal in $\jac H$
implies that $H_j$ has a term divisible by $x_1 x_k$ for some $j, k$ with 
$j + k \le n + 1$, and leads to a contradiction in a similar manner as in the 
proof of theorem \ref{asymjc} (with $i = 1$), because the argument in this 
proof does not get below the anti-diagonal of $\jac H$, and stays in the part 
where $\jac H$ is anti-symmetric.

Thus there is no $x_1$ above or on the anti-diagonal of $\jac H$. By the 
\antisym\ of the anti-diagonal, there is no $x_1$ in $\jac H$. 
Consequently, the first column of $\jac H$ is constant. 
By the \antisym\ conditions, all border entries of $\jac H$
are constant. Hence the entries of $\jac H$ that are not
on the border do not contain $x_1$ and neither $x_n$, and form 
a matrix with the same \antisym\ as $\jac H$ itself. So by 
induction on $n$, it follows that $\deg H \le 1$. Hence $F = x + H$ satisfies
the Jacobian conjecture, as desired.
\end{proof}

\noindent
In theorem \ref{sdjck} in the next section, we show that $\sjc(K,n)$, $\crjc(K,2n-1)$ and
$\crjc(K,2n)$ are equivalent when $K$ is a field of characteristic zero.

\section{Centrally symmetric variants of the Jacobian conjecture} \label{sec2}

\begin{definition}
$\hvjc(K,n)$, $\havjc(K,n)$, $\ahvjc(K,n)$, $\ahavjc(K,n)$ have 
horizontal and vertical \antisyms\ in their definitions.

$\hvsjc(K,n)$, $\havsjc(K,n)$, $\ahvsjc(K,n)$, $\ahavsjc(K,n)$ have 
horizontal, vertical, and diagonal \antisyms\ in their 
definitions.

$\djc(K,n)$ means that the Jacobian conjecture is satisfied for $n$-dimensional
maps $F = x + H$ as above that have Jacobians that are symmetric with respect to the center, i.e.,
entries $(i,j)$ and $(n+1-i,n+1-i)$ of $\jac H$ are equal for all $i,j$.

In the definition of $\adjc(K,n)$, $\cjc(K,n)$ and $\acjc(K,n)$, the central point 
`symmetry' is at least partially an antisymmetry.
\end{definition}

\noindent
Notice that for any matrix $A \in \Mat_n(K)$, the matrix
$$
M := \left( \begin{array}{cc} +A & \pm A \\ \mp A & -A \end{array} \right)
$$
of size $2n$ has some sort of tiling \antisym. By conjugating the map 
$(x,y) \mapsto M(x,y)$ with $(x,y\rv)$, we get a map with horizontal and vertical
\antisyms, since
$$
\left( \begin{array}{cc} I_n & {\mathbf 0} \\ {\mathbf 0} & I_n\rv \end{array} \right) M
\left( \begin{array}{cc} I_n & {\mathbf 0} \\ {\mathbf 0} & I_n\rv \end{array} \right) =
\left( \begin{array}{cc} +A & \pm((A\tp)\rv)\tp \\ \mp A\rv & -(((A\tp)\rv)\tp)\rv \end{array} \right)
= \left( \ifUseEepic \begin{array}{cc} 
    +A & \pm\flipv{A} \\ \mp\fliph{A} & -\rotu{A} 
\end{array} \else \begin{array}{cc} 
    \begin{tikzpicture}[x=1pt,y=1pt] \useasboundingbox (-7.5,-3) rectangle (7.5,5); 
    \path (0,0) node {$+A$}; \end{tikzpicture} &
    \begin{tikzpicture}[x=1pt,y=1pt] \useasboundingbox (-7.5,-3) rectangle (7.5,5); 
    \path (0,0) node[xscale=-1] {$A \pm$}; \end{tikzpicture} \\
    \begin{tikzpicture}[x=1pt,y=1pt] \useasboundingbox (-7.5,-3) rectangle (7.5,5); 
    \path (0,0) node[yscale=-1] {$\pm A$}; \end{tikzpicture} &
    \begin{tikzpicture}[x=1pt,y=1pt] \useasboundingbox (-7.5,-3) rectangle (7.5,5); 
    \path (0,0) node[xscale=-1,yscale=-1] {$A-$}; \end{tikzpicture}
\end{array} \fi \right)
$$

Any square matrix $M$ can be written as
$$
M = \frac12 (M + M\tp) + \frac12 (M - M\tp)
$$ 
which is the sum of a matrix with regular symmetry $\sjc$ 
and a matrix with regular antisymmetry $\asjc$. In a similar manner, a
square matrix $M$ of size $N$ with symmetry $\djc$ can be written as
$$
M = \frac12 (M + M\rv) + \frac12 (M - M\rv)
$$
which is the sum of a matrix with symmetry $\hvjc$ and a matrix with
\antisym\ $\ahavjc$, because the symmetry $\djc$ means exactly that
$M\rv = M I_N\rv$, and the left and right hand side of $M \pm M\rv = M \pm M I_N\rv$ 
have a horizontal and a vertical \antisym\ axis respectively.

If $F = X + H \in K[X]^N$ such that $\jac H$ has symmetry $\djc$,
then we can write
\begin{equation} \label{dotdecom}
F = \frac12 (F + F\rv) + \frac12 (F - F\rv)
\end{equation}
and $\frac12 (F + F\rv)$ and $\frac12 (F - F\rv)$ have Jacobian \antisyms\
$\hvjc$ and $\ahavjc$ respectively, because $\jac_X (F\rv) = (\jac_X F)\rv$.

\begin{theorem} \label{djck}
Assume $K$ is a field of characteristic zero. 
Then $\jc(K,n)$, $\djc(K,2n-1)$, $\djc(K,2n)$, $\hvjc(K,2n-1)$, 
$\hvjc(K,2n)$, $\ahavjc(K,2n)$ and $\ahavjc(K,\allowbreak 2n+1)$ 
are all equivalent. 
\end{theorem}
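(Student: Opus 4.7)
The plan is to establish all seven equivalences by a single change-of-basis trick. Let $P$ be an $N\times N$ invertible matrix whose columns are eigenvectors of $I_N\rv$, chosen so that
$$P^{-1}I_N\rv P \,=\, D \,:=\, \diag(I_{n_+},\, -I_{n_-}),$$
where $n_+ := \lceil N/2 \rceil$ and $n_- := \lfloor N/2 \rfloor$. For $N = 2n$ one may take $P = \Bigl(\begin{smallmatrix} I_n & I_n \\ I_n\rv & -I_n\rv \end{smallmatrix}\Bigr)$; for odd $N \in \{2n-1,\,2n+1\}$ the analogous matrix (assigning the fixed middle basis vector to the $+1$-eigenspace) works. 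Consider the conjugate $\tilde F(Y) := P^{-1}F(PY) = Y + \tilde H(Y)$, where $\tilde H(Y) := P^{-1}H(PY)$ and $\jac \tilde H(Y) = P^{-1}(\jac H)(PY)\,P$. Because $P$ is a constant invertible matrix, conjugation preserves the identity linear part of $F$, the Keller condition, invertibility, nilpotency of $\jac H$, and the term-degree set $S$, so the Jacobian Conjecture transfers between $F$ and $\tilde F$.

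The key step is to translate the three matrix \antisyms\ of the theorem into transparent block conditions on $\jac \tilde H$. A direct block-matrix calculation shows: \antisym\ $\hvjc$ (i.e.\ $MI_N\rv = M = I_N\rv M$) becomes $\jac\tilde H\,D = \jac\tilde H = D\,\jac\tilde H$, which forces $\jac\tilde H$ to vanish outside the upper-left $n_+\times n_+$ block; \antisym\ $\ahavjc$ (i.e.\ $MI_N\rv = -M = I_N\rv M$) forces $\jac\tilde H$ to vanish outside the lower-right $n_-\times n_-$ block; and \antisym\ $\djc$ (i.e.\ $M$ commutes with $I_N\rv$) forces $\jac\tilde H$ to be block-diagonal with blocks of sizes $n_+$ and $n_-$. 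Integrating these constraints on $\jac\tilde H$ and absorbing additive constants into translations (which affect neither Kellerness nor invertibility), $\tilde F$ takes the shape $(u+F_1(u),\,v)$, $(u,\,v+F_2(v))$, or $(u+F_1(u),\,v+F_2(v))$ in the $\hvjc$, $\ahavjc$, and $\djc$ cases respectively, where each $F_i$ is an arbitrary polynomial map of dimension $n_i$.

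A split map of this form is Keller, resp.\ invertible, if and only if each nontrivial component is. Hence each of the seven listed conditions is equivalent to a conjunction of $\jc(K, n_+)$ and $\jc(K, n_-)$. For the three dimensional scenarios $(N, n_+, n_-) \in \{(2n-1, n, n-1),\,(2n, n, n),\,(2n+1, n+1, n)\}$ covered by the theorem, the distinguished dimension is always $n$; only $\djc(K, 2n-1)$ involves the auxiliary dimension $n-1$, but $\jc(K, n)\Rightarrow \jc(K, n-1)$ via the trivial extension $F \mapsto (F, x_n)$, which preserves the Keller condition, nilpotency, and the degree set. All seven conditions therefore collapse to $\jc(K, n)$ and are mutually equivalent. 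The principal technical obstacle is the block-matrix computation verifying the three \antisym\ translations; the rest is standard bookkeeping to confirm that each optional side condition (prescribed degree set, nilpotency, Kellerness) transfers cleanly in both directions of each equivalence.
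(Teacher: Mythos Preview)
Your proof is correct and follows essentially the same strategy as the paper's. Both arguments rest on the observation that the involution $X \mapsto I_N\rv X$ has eigenvalues $\pm 1$ with eigenspaces of dimensions $\lceil N/2\rceil$ and $\lfloor N/2\rfloor$, so that after a suitable linear conjugation the three \antisyms\ $\hvjc$, $\ahavjc$, $\djc$ become the conditions that $\jac\tilde H$ is supported in the $(+1,+1)$ block, the $(-1,-1)$ block, or is block-diagonal, respectively. The paper carries this out concretely via the two-step conjugation by $(x+y,x_{n+1},x-y)$ followed by $(x,x_{n+1},y\rv)$; your single matrix $P$ is precisely (a version of) the composite of these, and your $n_\pm$ bookkeeping packages the odd and even cases uniformly where the paper treats them in parallel.
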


\begin{proof}
Since $\jc(K,n+1)$ implies $\jc(K,n)$, it suffices to prove the following.
\begin{equation} \label{eqv1} \begin{split}
\hvjc(K,2n) &\Longleftrightarrow \jc(K,n) \\
\ahavjc(K,2n) &\Longleftrightarrow \jc(K,n) \qquad\qquad \mbox{and} \\
\djc(K,2n) &\Longleftrightarrow \jc(K,n) \wedge \jc(K,n)
\end{split} \end{equation}
and
\begin{equation} \label{eqv2} \begin{split}
\hvjc(K,2n+1) &\Longleftrightarrow \jc(K,n+1) \\
\ahavjc(K,2n+1) &\Longleftrightarrow \jc(K,n) \qquad\qquad \mbox{and}  \\
\djc(K,2n+1) &\Longleftrightarrow \jc(K,n+1) \wedge \jc(K,n)
\end{split} \end{equation}
We only prove \eqref{eqv2}, since \eqref{eqv1} can be proved in a similar 
manner: you just ignore the $(n+1)$-th row and column.
\begin{description}

\item[(\RightArroW):] 
Let $(F,f) = (x,x_{n+1}) + (H,h)$ be a polynomial map in dimension $n+1$, where $f = x_{n+1} + h$ 
is a single polynomial and $F = x + H$ is an $n$-tuple of polynomials. Let
$\tilde{F} = x + \tilde{H}$ be a polynomial map in dimension $n$.
Then $(F, f, \tilde{F}(y))$ is invertible or of Keller type, if and only if both $(F,f)$ and $\tilde{F}$ 
are invertible or of Keller type respectively.

By conjugating
$(F, f, \tilde{F}(y))$ with the linear map $(x+y,x_{n+1},x-y)$, we obtain
\begin{align}
\frac12 \left( \begin{array}{ccc} 
I_n & {\mathbf 0} & I_n \\
{\mathbf 0} & 2 & {\mathbf 0} \\
I_n & {\mathbf 0} & -I_n \end{array} \right) &
\left( \begin{array}{c} F(x+y,x_{n+1}) \\ f(x+y,x_{n+1}) \\
\tilde{F}(x-y) \end{array} \right) \nonumber \\
&= \frac12 \left( \begin{array}{c} F(x+y,x_{n+1}) + \tilde{F}(x-y) \\ 
2f(x+y,x_{n+1}) \\ F(x+y,x_{n+1}) - \tilde{F}(x-y) \end{array} \right) \label{FftF}
\end{align}
Now the Jacobian of the $F$-part and the $\tilde{F}$-part of the right hand side
of \eqref{FftF}, without the row and column in the middle, 
have tiling \antisyms
$$
\left( \begin{array}{cc} +A & +A \\ +A & +A \end{array} \right)
\qquad \mbox{and} \qquad
\left( \begin{array}{cc} +A & -A \\ -A & +A \end{array} \right)
$$
respectively. A subsequent conjugation with $(x, x_{n+1}, y\rv)$ 
of the right hand side of \eqref{FftF} gives Jacobian symmetry $\hvjc$ 
for the $(F,f)$-part, and Jacobian \antisym\ $\ahavjc$ 
for the $\tilde{F}$-part. 

Since $\djc$ is a subsymmetry of both $\hvjc$ and $\ahavjc$, we have a
map with symmetry $\djc$ in general, a map with symmetry $\hvjc$ when $\tilde{F}(y) = y$, 
and a map with \antisym\ $\ahavjc$ when $(F,f) = (x,x_{n+1})$. This map is a conjugation
of $(F, f, \tilde{F}(y))$, and the forward implications in \eqref{eqv2} follow
because conjugations preserve the linear part $(x,x_{n+1},y)$ of
$(F, f, \tilde{F}(y))$.

\item[(\LeftArroW):]
Take $G \in K[x,x_{n+1},y]^{2n+1}$ such that $G - (x,x_{n+1},y)$ has Jacobian symmetry $\djc$.
By \eqref{dotdecom}, we can write $G = \frac12(G + G\rv) + \frac12(G - G\rv)$, where

\begin{align*}
G + G\rv &= 
\left( \begin{array}{c} F(x + y\rv, x_{n+1}, x - y\rv) \\ 
       2f(x + y\rv, x_{n+1}, x - y\rv) - x_{n+1} \\
       F\rv(x + y\rv, x_{n+1}, x - y\rv) \end{array} \right)
\intertext{has Jacobian symmetry $\hvjc$, and }
G - G\rv &= 
\left( \begin{array}{c} 
       \tilde{F}(x + y\rv, x_{n+1}, x - y\rv) \\ 
       x_{n+1} \\
       - \tilde{F}\rv(x + y\rv, x_{n+1}, x - y\rv) \end{array} \right)
\end{align*}
has Jacobian \antisym\ $\ahavjc$. From the \antisyms\ $\hvjc$ and $\ahavjc$, we can 
derive that $F_j,f \in K[x,x_{n+1}]$ and $\tilde{F}_j \in K[y]$ for all $j \le n$.
By replacing $\tilde{F}(x,x_{n+1},y)$ by $\tilde{F}(y)$ (and $F(x,x_{n+1},y)$ by $F(x,x_{n+1})$), 
we obtain that $G$ is of the form
$$
G = \frac12(G + G\rv) + \frac12(G - G\rv) = \frac12\left( \begin{array}{c} 
       F(x + y\rv, x_{n+1}) + \tilde{F}(x - y\rv) \\ 
       2f(x + y\rv, x_{n+1}) \\
       F\rv(x + y\rv, x_{n+1}) - \tilde{F}\rv(x - y\rv) 
\end{array} \right)
$$
Hence the conjugation of $G$ with the linear map $(x,x_{n+1},y\rv)$ is equal to the 
right hand side of \eqref{FftF}, which is the conjugation of $(F, f, \tilde{F}(y))$
with the linear map $(x+y,x_{n+1},x-y)$.  This gives the last backward implication in 
\eqref{eqv2}. The other backward implications in \eqref{eqv2} follow by taking 
$\tilde{F}(y) = y$ and $(F,f) = (x,x_{n+1})$ respectively.
\qedhere

\end{description}
\end{proof}

\begin{definition}
Define an `instance of $\jcs(K,n)$' as a Keller map $F \in K[x]^n$ such that $F - x$
has Jacobian \antisym\ $\jcs$, where $\jcs$ is any \antisym\ of the square.
\end{definition}

\noindent
Notice that in both \eqref{eqv1} and \eqref{eqv2}, the first and the second right hand side are satisfied,
if and only if the last right hand side is satisfied. Hence we have proved the following as well.

\begin{theorem}
Assume $K$ is a field of characteristic zero. 
A map $G = X + H$ is an (invertible) instance of $\djc(K,N)$, 
if and only if $X + \frac12(H + H\rv)$ is an (invertible) instance of 
$\hvjc(K,N)$ and $X + \frac12(H - H\rv)$ is an (invertible) 
instance of $\ahavjc(K,N)$.
\end{theorem}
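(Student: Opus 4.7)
The plan is to reduce this theorem to Theorem \ref{djck} by matching the specific $\hvjc$- and $\ahavjc$-instances that appear in its proof.

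For the Jacobian-symmetry equivalence, I use that $\jac H$ has $\djc$ iff $\jac H$ commutes with $I_N\rv$ (noted just before Theorem \ref{djck}). With spectral projections $P := \tfrac12(I + I_N\rv)$ and $Q := \tfrac12(I - I_N\rv)$, the identity $(\jac H)\rv = \jac H \cdot I_N\rv$ yields
$$\tfrac12\bigl(\jac H + (\jac H)\rv\bigr) = \jac H \cdot P, \qquad \tfrac12\bigl(\jac H - (\jac H)\rv\bigr) = \jac H \cdot Q,$$
which are the $\hvjc$ and $\ahavjc$ parts (matching the pictorial decomposition already recalled in the excerpt). Conversely, any sum of an $\hvjc$-matrix and an $\ahavjc$-matrix has $\djc$-symmetry. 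For the Keller condition, commutativity $[\jac H, P] = [\jac H, Q] = 0$ and $PQ = 0$ give
$$(I + \jac H \cdot P)(I + \jac H \cdot Q) = I + \jac H(P + Q) + (\jac H)^2 PQ = I + \jac H,$$
so $\det \jac G_{\mathrm{hv}} \cdot \det \jac G_{\mathrm{av}} = \det \jac G$, where $G_{\mathrm{hv}} := X + \tfrac12(H + H\rv)$ and $G_{\mathrm{av}} := X + \tfrac12(H - H\rv)$; since the units of $K[X]$ are exactly $K^*$, the product is a unit iff each factor is.

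For the invertibility equivalence, I invoke Theorem \ref{djck}. Its proof establishes a linear bijection (via conjugation by $T$ and by $(x, x_{n+1}, y\rv)$) between $\djc(K,N)$-instances $G$ and triples $((F,f), \tilde F)$ of $\jc$-instances of the appropriate dimensions, and invertibility transports: $G$ is invertible iff $(F,f)$ and $\tilde F$ both are. By linearity of the conjugation, setting $\tilde F(y) = y$ specializes the construction to the $\hvjc$-instance $G_{\mathrm{hv}}$, and setting $(F,f) = (x, x_{n+1})$ specializes it to the $\ahavjc$-instance $G_{\mathrm{av}}$. Invoking Theorem \ref{djck} separately for $\hvjc$ (invertibility of $G_{\mathrm{hv}}$ iff $(F,f)$) and for $\ahavjc$ (invertibility of $G_{\mathrm{av}}$ iff $\tilde F$) then yields $G$ invertible iff $G_{\mathrm{hv}}$ and $G_{\mathrm{av}}$ are both invertible.

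The main obstacle is verifying that the $\hvjc$- and $\ahavjc$-instances arising in Theorem \ref{djck}'s proof coincide with $G_{\mathrm{hv}}$ and $G_{\mathrm{av}}$ as defined in the statement. The natural decomposition $G = \tfrac12(G + G\rv) + \tfrac12(G - G\rv)$ yields pieces with linear parts $\tfrac12(X \pm X\rv)$, so to produce instances with linear part $X$ one shifts by $\pm \tfrac12(X - X\rv)$; this linear shift preserves Jacobian symmetry, the Keller condition, and invertibility, and the direct computation $G_{\mathrm{hv}} - X = \tfrac12((G - X) + (G - X)\rv) = \tfrac12(H + H\rv)$ (and similarly for $G_{\mathrm{av}}$) confirms the identification.
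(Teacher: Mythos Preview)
Your proof is correct and follows the same route as the paper, which derives this theorem as an immediate byproduct of the proof of Theorem~\ref{djck}: the instance-level bijection there between $G$ and the pair $((F,f),\tilde F)$ already shows that $G$ is an (invertible) Keller map if and only if the $\hvjc$-piece (corresponding to $(F,f)$) and the $\ahavjc$-piece (corresponding to $\tilde F$) are. Your explicit identification of those pieces with $X+\tfrac12(H\pm H\rv)$ via linearity of conjugation, together with $H_1\rv=H_1$ and $H_2\rv=-H_2$, is exactly the content hidden in the paper's one-line remark.

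The one point where you genuinely add something is the Keller equivalence: rather than reading it off from the conjugation to the product map $(F,f,\tilde F(y))$ as the paper does, you prove it directly via the clean factorization $(I+\jac H\cdot P)(I+\jac H\cdot Q)=I+\jac H$ using the commuting orthogonal idempotents $P,Q$. This is a nice self-contained algebraic argument; it also immediately yields the nilpotency equivalence, since $(\jac H)^k=(\jac H\cdot P)^k+(\jac H\cdot Q)^k$ under the same commutation. The paper's conjugation approach buys the same conclusions but packages Keller, nilpotency, and invertibility together in one step.
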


\noindent
When we combine horizontal and vertical \antisyms\ with diagonal
ones, we get the following. 

\begin{theorem} \label{sdjck}
Assume $K$ is a field of characteristic zero. Then
$\sjc(K,n)$, $\crjc(K,2n-1)$, $\crjc(K,2n)$, $\hvsjc(K,2n-1)$, $\hvsjc(K,2n)$,  
$\ahavsjc(K,2n)$ and $\ahavsjc(K,\allowbreak 2n+1)$ are all equivalent.
\end{theorem}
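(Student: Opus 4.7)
The plan is to mirror the proof of Theorem~\ref{djck}, adding the symmetric-Jacobian condition $\sjc$ throughout. As matrix \antisyms, $\hvsjc = \hvjc + \sjc$, $\ahavsjc = \ahavjc + \sjc$, and $\crjc = \djc + \sjc$, so two key checks suffice: (i) the linear conjugations used in \ref{djck} preserve $\sjc$-symmetry, and (ii) the decomposition $G = \tfrac12(G + G\rv) + \tfrac12(G - G\rv)$ yields $\sjc$-symmetric pieces when $G$ itself is $\sjc$-symmetric, which is immediate from \eqref{jrev}.

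For the even-dimensional equivalences $\sjc(K,n) \Leftrightarrow \hvsjc(K,2n) \Leftrightarrow \ahavsjc(K,2n) \Leftrightarrow \crjc(K,2n)$, I would run the analogue of the construction of \ref{djck} without a middle coordinate: given $\sjc$-symmetric $F \in K[x]^n$ and $\tilde F \in K[y]^n$, form $(F, \tilde F(y))$, whose Jacobian is block-diagonal and $\sjc$-symmetric as a $2n \times 2n$ matrix. Conjugate by $L(x,y) = (x+y, x-y)$: its Jacobian $T = \left(\begin{smallmatrix} I_n & I_n \\ I_n & -I_n\end{smallmatrix}\right)$ satisfies $T\tp = T$ and $T^2 = 2I_{2n}$, so $T^{-1} S T = \tfrac12\, T S T$ is symmetric whenever $S$ is, via $(TST)\tp = T\tp S\tp T\tp = TST$. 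The second conjugation by $(x, y\rv)$, with symmetric orthogonal Jacobian $\diag(I_n, I_n\rv)$, also preserves $\sjc$. Specialising $\tilde F = y$, $F = x$, or both nontrivial as in the proof of \ref{djck} yields the three even-dimensional equivalences, whose converses follow from the splitting $G = \tfrac12(G + G\rv) + \tfrac12(G - G\rv)$ and the inverse conjugations.

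For the odd-dimensional equivalences, I would reduce to the even case. The case $\ahavsjc(K,2n+1) \Leftrightarrow \ahavsjc(K,2n)$ is direct: the horizontal and vertical antisymmetries force the middle row and column of $\jac H$ to vanish, so any $\ahavsjc(K,2n+1)$-Keller-instance satisfies $G_{n+1} = x_{n+1}$ with all other components independent of $x_{n+1}$---structurally an $\ahavsjc(K,2n)$-instance. The cases $\hvsjc(K,2n-1) \Leftrightarrow \hvsjc(K,2n)$ and $\crjc(K,2n-1) \Leftrightarrow \crjc(K,2n)$ I would handle by a linear coordinate change that splits the middle variable into two, mirroring the matrix-level stabilization $\left(\begin{smallmatrix} a & b & a \\ b & c & b \\ a & b & a\end{smallmatrix}\right) \leftrightarrow \left(\begin{smallmatrix} a & b & b & a \\ b & c & c & b \\ b & c & c & b \\ a & b & b & a\end{smallmatrix}\right)$.

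The main obstacle is precisely this last stabilization: a naive doubling of the middle row and column at the matrix level destroys the Keller condition (the resulting Jacobian gains two equal rows), so the coordinate-change lifting must be designed carefully to produce a genuinely invertible Keller map of the new dimension while simultaneously respecting both the $\hvjc$/$\djc$ structure and the added $\sjc$ structure---essentially, one needs a symmetric-preserving variant of the odd-dimensional conjugation $T_1$ of \ref{djck}, which fails to preserve $\sjc$ as written because it is symmetric but not orthogonal. This is the delicate heart of the theorem, and is what distinguishes it from the more straightforward reduction available for $\ahavsjc$.
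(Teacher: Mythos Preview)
Your overall plan coincides with the paper's: its entire proof is the single sentence ``similar to that of theorem~\ref{djck}.'' Your verification of the even-dimensional cases is correct and is exactly what that sentence intends---the conjugations by $\left(\begin{smallmatrix}I_n & I_n\\ I_n & -I_n\end{smallmatrix}\right)$ and by $\diag(I_n,I_n\rv)$ are symmetric with $T\tp T$ scalar, hence preserve $\sjc$, and the $\ahavsjc(K,2n{+}1)$ case reduces trivially to $2n$ because the middle row and column of $\jac H$ vanish.

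Your worry about the odd-dimensional $\hvsjc$ and $\crjc$ cases is not misplaced. The conjugation $T_1$ with $\jac T_1=\left(\begin{smallmatrix}I_n&0&I_n\\0&1&0\\I_n&0&-I_n\end{smallmatrix}\right)$ used in \eqref{eqv2} satisfies $T_1 T_1\tp=\diag(2I_n,1,2I_n)$, so $T_1^{-1}MT_1$ is symmetric only when $M$ commutes with $\diag(2I_n,1,2I_n)$; for the block-diagonal $M=\diag\big(\jac(F,f),\,\jac\tilde F\big)$ this forces $\partial F/\partial x_{n+1}=0$, which is not assumed. Concretely, the Jacobian of the map in \eqref{FftF} has $(1,2)$-block $\tfrac12\,\partial F/\partial x_{n+1}$ but $(2,1)$-block $\partial f/\partial x$, and these are transposes of one another only up to a factor of~$2$. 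So the literal transplant of the odd-dimensional argument from theorem~\ref{djck} does not yield an $\sjc$-symmetric map, and the paper's ``similar'' glosses over precisely the point you flag. Equivalently, in the backward direction an instance $G$ of $\hvsjc(K,2n{+}1)$ is conjugate to $(\hat F(u),v)$ with $\jac_u(\hat F-u)=\diag(2I_n,1)\cdot S$ for some symmetric~$S$, which is not itself symmetric, and rescaling to fix this requires $\sqrt 2\in K$.

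That said, you somewhat overstate the difficulty by calling it ``the delicate heart of the theorem''; the paper clearly regards the adjustment as routine. You have not supplied the missing step, and neither does the paper---so in that respect your proposal is exactly as complete as the published proof, while being more explicit about where the friction lies.
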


\begin{proof}
The proof is similar to that of the equivalence of $\jc(K,n)$, $\djc(K,2n-1)$, 
$\djc(K,2n)$, $\hvjc(K,2n-1)$, $\hvjc(K,2n)$, $\ahavjc(K,2n)$ and $\ahavjc(K,2n+1)$
in the proof of theorem \ref{djck}.
\end{proof}

\noindent
Notice that we have proved the following as well.

\begin{theorem}
Assume $K$ is a field of characteristic zero. 
Then a map $G = X + H$ is an (invertible) instance of $\crjc(K,N)$, 
if and only if $X + \frac12(H + H\rv)$ is an (invertible) instance of 
$\hvsjc(K,N)$ and $X + \frac12(H - H\rv)$ is an (invertible) 
instance of $\ahavsjc(K,N)$.
\end{theorem}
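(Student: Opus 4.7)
The plan is to mimic the short derivation of the analogous statement for $\djc$ (the unnamed theorem immediately following theorem~\ref{djck}), while also tracking the diagonal symmetry $\sjc$ throughout. Recall that the \antisym\ $\crjc$ for $\jac H$ amounts to the simultaneous conditions $\sjc$ and $\djc$. Given such $H$, the decomposition \eqref{dotdecom} writes $H = \tfrac12(H + H\rv) + \tfrac12(H - H\rv)$, and the $\djc$-analog directly gives that $\tfrac12(H + H\rv)$ has Jacobian \antisym\ $\hvjc$ while $\tfrac12(H - H\rv)$ has \antisym\ $\ahavjc$. What is new is to verify that both halves additionally carry $\sjc$, which will upgrade their \antisyms\ to $\hvsjc$ and $\ahavsjc$ respectively.

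For this inheritance I would set $M := \jac H$ and compute $(M\rv)\tp = M\tp I_N\rv$ using \eqref{mrev}, then apply $M\tp = M$ (from $\sjc$) together with $M I_N\rv = I_N\rv M$ (the matrix-theoretic reformulation of $\djc$) to obtain $(M\rv)\tp = M\rv$. Hence $M \pm M\rv$ are both symmetric, so the two halves acquire $\sjc$ as claimed. Conversely, if the two halves have $\hvsjc$ and $\ahavsjc$, then $\jac H$, being the sum of their Jacobians, inherits $\djc$ from the $\djc$-analog and $\sjc$ as a sum of two symmetric matrices, hence has $\crjc$.

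The invertibility equivalence is handled by the conjugation construction used in the proof of theorem~\ref{sdjck} (paralleling the one in theorem~\ref{djck}): that construction provides a linear-conjugation bijection between $\crjc(K,N)$-instances and pairs consisting of a $\sjc$-instance of dimension $\lceil N/2 \rceil$ and a $\sjc$-instance of dimension $\lfloor N/2 \rfloor$. Under this bijection, $X + \tfrac12(H + H\rv)$ corresponds to the subcase where the second $\sjc$-factor is trivial, and $X + \tfrac12(H - H\rv)$ to the subcase where the first factor is trivial. Since linear conjugation preserves both the Keller condition and polynomial invertibility, and since the Jacobian determinant of a product map factors multiplicatively, $G$ is an (invertible) instance of $\crjc(K,N)$ if and only if both halves are (invertible) instances of $\hvsjc(K,N)$ and $\ahavsjc(K,N)$ respectively. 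The main obstacle is not conceptual but the careful bookkeeping of all the \antisym\ axes through the linear conjugation, which however parallels exactly the routine already carried out in the proof of theorem~\ref{djck}.
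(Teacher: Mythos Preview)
Your proposal is correct and follows essentially the same route as the paper. The paper's proof is merely the remark ``Notice that we have proved the following as well'' after theorem~\ref{sdjck}, meaning one reruns the proof of theorem~\ref{djck} (and its corollary, the $\djc$-decomposition theorem) while carrying the extra diagonal symmetry $\sjc$ along; you do exactly this, only organized slightly differently by invoking the $\djc$-decomposition theorem as a black box and then verifying separately that $\sjc$ passes to both halves via the identity $(M\rv)\tp = M\tp I_N\rv = M I_N\rv = I_N\rv M = M\rv$. One small point: you characterize $\crjc$ as $\sjc \wedge \djc$, whereas the paper defines it as $\sjc \wedge \rsjc$; these are of course equivalent (composing the two diagonal reflections gives the central rotation), but it is worth making this reformulation explicit.
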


\noindent
Now assume that $F = x + H$ is power linear of {\em even} degree. Then the 
construction of an instance of $\hvjc(K,2n)$ out of the instance $F$ of
$\jc(K,n)$ gives a map that is power linear of even degree again, say
$(x,y) + (B(x,y))^{*d}$. Since $d$ is even, we can assume that $B$ has \antisym\
$\ahvjc$ instead of $\hvjc$. But that means that $B^2 = 0$. 

In the general case, we can make $(x,y) + (B(x,y))^{*d}$ out of 
$F = x + (Ax)^{*d}$, where
$$
B := \left( \begin{array}{cc} ab & -b^2 \\ a^2 & -ab \end{array} \right) \otimes A
= \left( \begin{array}{cc} ab A & -b^2 A \\ a^2 A & -ab A \end{array} \right)
$$
and again we have $B^2 = 0$ because the left factor of the Kronecker tensor product 
squares to zero as well. More precisely, if
$$
T := \left( \begin{array}{cc}
a\!\sqrt[d-1]{a b^d - a^d b} & -b\!\sqrt[d-1]{a b^d - a^d b} \\
a^d & -b^d \end{array} \right) \otimes I_n
$$
then the determinant of the left factor of the Kronecker product
just above is $-(\!\sqrt[d-1]{a b^d - a^d b})^d$. By Cramer's rule,
$$
T^{-1} = \bigg(\left(\frac1{\!\!\sqrt[d-1]{a b^d - a^d b}}\right)^{\!d} \cdot 
\left( \begin{array}{cc} b^d & -b\!\sqrt[d-1]{a b^d - a^d b} \\
a^d & -a\!\sqrt[d-1]{a b^d - a^d b} \end{array} \right)\bigg) \otimes I_n
$$
Since $\big((Ax)^{*d},0\big) = (1,0) \otimes (Ax)^{*d}$, it follows from the mixed product property
of the Kronecker product and the regular matrix product that
\begin{alignat*}{2}
T^{-1} (F,y)|_{(x,y)=T(x,y)}
&= (x,y) + {}&& \left(T^{-1} \cdot \left( \begin{array}{c} 1 \\ 0 \end{array} \right)\right)
   \otimes \big(Ax\big)^{*d}\big|_{(x,y) = T(x,y)} \\
&= (x,y) + {}&& \bigg(\left(\frac1{\!\!\sqrt[d-1]{a b^d - a^d b}}\right)^{\!d} \cdot 
\left( \begin{array}{cc} b^d \\ a^d \end{array} \right)\bigg) \otimes \\ & {} &&
\Big( A \big( a\!\sqrt[d-1]{a b^d - a^d b} x - b\!\sqrt[d-1]{a b^d - a^d b} y \big) \Big)^{*d} \\
&= (x,y) + {}&& \left( \begin{array}{c} b^d \\ a^d \end{array} \right) \otimes 
\Big( A \big( ax - by \big) \Big)^d \\
&= (x,y) + {}&& \Big(B(x,y)\Big)^{*d}
\end{alignat*}
See also \cite{druzA2}.

\begin{theorem} \label{nplusone}
Assume $K$ is a field of characteristic zero. 
Then $\cjc(K,2n) \Leftrightarrow \cjc(K,2n+1)$ and
$\acjc(K,2n) \Leftrightarrow \acjc(K,2n+1)$ hold.
\end{theorem}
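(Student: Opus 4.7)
My plan is to prove both equivalences by a stabilization argument, mirroring the treatment of $\dsjc$ in dimensions $2n$ vs.\ $2n+1$ earlier in the paper. The key claim is that an instance of $\cjc(K,2n+1)$ (resp.\ $\acjc(K,2n+1)$) is automatically forced to have zero middle row and column in its Jacobian, so it is essentially an instance of $\cjc(K,2n)$ (resp.\ $\acjc(K,2n)$) with a trivial extra coordinate inserted in the middle.

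First I would establish the structural lemma that for a $(2n+1)\times(2n+1)$ matrix $A=\jac H$ with \antisym\ $\cjc$, the row and column indexed $n+1$ vanish. The \antisym\ $\cjc$ is central, with upper-left, lower-left, and lower-right red and upper-right blue. Consider an entry $A_{n+1,\ell}$ with $\ell\ge n+2$: it lies on the horizontal edge separating the upper-right (blue) and lower-right (red) quadrants. The boundary-compatibility convention (as used for $\dsjc$) says it must satisfy both region's relations, namely $A_{n+1,\ell} = -A_{n+1,2n+2-\ell}$ from the blue side and $A_{n+1,\ell} = +A_{n+1,2n+2-\ell}$ from the red side, forcing $A_{n+1,\ell}=A_{n+1,2n+2-\ell}=0$. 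The middle column is handled analogously by the vertical boundary between upper-right and upper-left, and the center entry $(n+1,n+1)$ is zero since one of its adjoining quadrants is blue while its central-symmetric image is itself. For $\acjc$, where the blue quadrant is lower-right instead, exactly the same boundary-compatibility argument yields vanishing of the middle row and column.

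Given this, for $F=x+H$ an instance of $\cjc(K,2n+1)$ (resp.\ $\acjc(K,2n+1)$), the vanishing middle row of $\jac H$ forces $H_{n+1}$ to be a constant $c$, while the vanishing middle column forces $H_i$ for $i\ne n+1$ to be independent of $x_{n+1}$. Writing $\tilde x=(x_1,\ldots,x_n,x_{n+2},\ldots,x_{2n+1})$ and $\tilde F=(F_1,\ldots,F_n,F_{n+2},\ldots,F_{2n+1})$, we may regard $\tilde F$ as a $2n$-dimensional map in $\tilde x$ whose Jacobian is precisely the four $n\times n$ corner blocks of $\jac H$. These blocks inherit the $\cjc$ (resp.\ $\acjc$) \antisym\ in dimension $2n$, the degree set $S$ is preserved, and if $(\jac H)^{2n+1}=0$ then $(\jac_{\tilde x}\tilde H)^{2n}=0$ as well since a row/column of zeros contributes nothing to powers. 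Finally $F$ is invertible (resp.\ Keller) iff $\tilde F$ is. So instances of $\cjc(K,2n+1)$ correspond bijectively to instances of $\cjc(K,2n)$ (padded in the middle by $x_{n+1}+c$), yielding both implications of the equivalence; and likewise for $\acjc$.

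The only real obstacle is verifying cleanly that the coloring forces the middle row and column to be zero in odd dimension; once that structural lemma is in hand, the stabilization correspondence is immediate and the passage to the polynomial map level is routine.
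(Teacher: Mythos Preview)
Your proposal is correct and follows exactly the same stabilization argument as the paper, which proves the theorem in a single sentence by noting that the middle row and column of a matrix with \antisym\ $\cjc$ or $\acjc$ must vanish. You have simply spelled out in detail the boundary-compatibility reason for that vanishing, which the paper leaves implicit.
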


\begin{proof}
Both equivalences follow by stabilization: the row and column in the middle of a 
matrix with \antisym\ $\cjc$ or $\acjc$ are both zero.
\end{proof}

\noindent
The following theorem shows that complex polynomial maps can be seen as real 
polynomial maps with a certain Jacobian \antisym.

\begin{theorem} \label{cjcr}
$\jc(\C,n)$, $\djc(\C,2n-1)$, $\djc(\C,2n)$, $\cjc(\C,2n)$, $\cjc(\C,2n+1)$, 
$\cjc(\R,2n)$ and $\cjc(\R,2n+1)$ are equivalent.
\end{theorem}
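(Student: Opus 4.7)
The plan is to combine the previously established reductions with two new linear conjugations that link the $\cjc$-patterns to the $\djc$- and $\jc$-patterns. By theorem~\ref{djck} applied to $K=\C$ we already have $\jc(\C,n) \Leftrightarrow \djc(\C,2n-1) \Leftrightarrow \djc(\C,2n)$, and by theorem~\ref{nplusone} we have $\cjc(K,2n) \Leftrightarrow \cjc(K,2n+1)$ for both $K=\R$ and $K=\C$. Hence it remains to establish the two equivalences $\djc(\C,2n) \Leftrightarrow \cjc(\C,2n)$ and $\jc(\C,n) \Leftrightarrow \cjc(\R,2n)$.

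For $\djc(\C,2n) \Leftrightarrow \cjc(\C,2n)$, I would use the diagonal linear conjugation by $T := \diag(I_n, \I I_n) \in \GL_{2n}(\C)$. If $\jac H$ has $n \times n$ block decomposition $A, B, C, D$, a direct computation gives that $T^{-1}(\jac H) T$ has blocks $A,\, \I B,\, -\I C,\, D$. Both $\djc$ and $\cjc$ impose the condition $D = I_n\rv A\, I_n\rv$ on the diagonal blocks; they differ only on the off-diagonal blocks, where $\djc$ demands $C = I_n\rv B\, I_n\rv$ while $\cjc$ demands $C = - I_n\rv B\, I_n\rv$. The opposite signs $+\I$ and $-\I$ multiplying $B$ and $C$ flip precisely one of these sign conditions, so passing from $H$ to $\tilde H(X) := T^{-1} H(TX)$ interchanges $\cjc$- and $\djc$-symmetry of the Jacobian. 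Linear conjugation preserves the identity part, the Keller property, the term-degree set $S$, and the similarity class of $\jac H$ (hence both singularity and nilpotency), giving the desired equivalence.

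For $\jc(\C,n) \Leftrightarrow \cjc(\R,2n)$, I would combine the standard real/complex identification with the real conjugation by $T' := \diag(I_n, I_n\rv) \in \GL_{2n}(\R)$. Given a complex Keller map $F_\C = z + H_\C \in \C[z]^n$, write $z_k = x_k + \I y_k$ to regard $F_\C$ as a real Keller map $F_\R \in \R[x,y]^{2n}$; by Cauchy--Riemann the Jacobian of $F_\R - (x,y)$ has block form $\begin{pmatrix} P & Q \\ -Q & P \end{pmatrix}$. A block computation shows that conjugation by $T'$ takes such Cauchy--Riemann matrices bijectively onto $\cjc$-symmetric matrices, so $T' F_\R(T'(x,y))$ is an instance of $\cjc(\R,2n)$; conversely, if $G = X + H \in \R[X]^{2n}$ has $\cjc$-symmetric Jacobian, then the Jacobian of $T' G(T' X)$ has Cauchy--Riemann form, and integration recovers a complex polynomial map $F_\C = z + H_\C$ whose real form is $T' G(T' X)$. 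The Keller property transfers via $\det \jac F_\R = |\det \jac F_\C|^2$ (using unique factorization in $\C[x,y]$ in the $\R \to \C$ direction), and invertibility of $F_\C$ over $\C$ corresponds to invertibility of its real form over $\R$; the real matrix $\begin{pmatrix} P & Q \\ -Q & P \end{pmatrix}$ is $\C$-similar to $\diag(P+\I Q,\, P-\I Q)$, so singularity and nilpotency of the real and complex Jacobians correspond; and the prescribed degree set $S$ is manifestly preserved throughout. The main technical obstacle is the block-matrix bookkeeping with the row- and column-reversal operators, and checking that every optional side condition (the set $S$ and the singularity/nilpotency of $\jac H$) propagates correctly through both the complex conjugation by $T$ and the combined real/complex passage through $T'$.
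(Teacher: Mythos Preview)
Your proposal is correct and uses essentially the same constructions as the paper: the conjugation by $\diag(I_n,\I I_n)$ linking $\djc$ and $\cjc$ over $\C$, and the real/imaginary-part passage combined with conjugation by $\diag(I_n,I_n\rv)$ linking $\jc(\C,n)$ and $\cjc(\R,2n)$.

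The only organizational difference is that the paper closes a cycle of one-way implications
\[
\djc(\C,2n)\Rightarrow\cjc(\C,2n)\Rightarrow\cjc(\R,2n)\Rightarrow\jc(\C,n),
\]
using the trivial inclusion $\R\subset\C$ for the middle arrow, whereas you prove two separate bidirectional equivalences $\djc(\C,2n)\Leftrightarrow\cjc(\C,2n)$ and $\jc(\C,n)\Leftrightarrow\cjc(\R,2n)$. Your route therefore needs the extra observation that every real $\cjc$ instance, after conjugation by $\diag(I_n,I_n\rv)$, has a Jacobian of Cauchy--Riemann type and hence arises from a complex polynomial map; this is correct and makes the correspondence more explicit, at the cost of a little more bookkeeping than the paper's cycle.
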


\begin{proof}
The equivalence of $\jc(\C,n)$, $\djc(\C,2n-1)$ and $\djc(\C,2n)$ 
follows from theorem \ref{djck}. The equivalence of $\cjc(K,2n)$ and $\cjc(K,2n+1)$
for $K \in \{\C,\R\}$ follows from theorem \ref{nplusone}. 
Since the implication $\djc(\C,2n) \Rightarrow 
\cjc(\C,2n)$ follows by conjugation with $(x, \I y)$ and the implication 
$\cjc(\C,2n) \Rightarrow \cjc(\R,2n)$ is direct, the implication
$\cjc(\R,2n) \Rightarrow \jc(\C,n)$ remains to be proved.

Assume that $\cjc(\R,2n)$ holds. Let $F$ be an instance of Keller type of $\jc(\C,n)$. 
Then $(F(x), \tilde{F}(y))$ is of Keller type as well, where the coefficients
of the polynomial map $\tilde{F}$ are the complex conjugates of those of $F$.
Furthermore, $F$ is invertible in case $(F(x), \tilde{F}(y))$ is.

We prove that $\jc(\C,n)$ holds by showing that $(F(x), \tilde{F}(y))$ can be 
transformed to an instance of $\cjc(\R,2n)$ by compositions with invertible 
linear maps. By \cite[Prop.\@ 1.1.7]{arnoboek}, the instance of $\cjc(\R,2n)$
satisfies the Jacobian conjecture over $\C$ as well as over $\R$, which gives
$\jc(\C,n)$.

Notice that
\begin{align}
\frac12 \left( \begin{array}{cc} I_n & I_n \\
 - \I I_n & \I I_n \end{array} \right) 
\left( \begin{array}{c} F(x + \I y) \\ \tilde{F}(x - \I y) \end{array} \right)
&= \frac12 \left( \begin{array}{c} F(x + \I y) + \tilde{F}(x - \I y) \\ 
- \I F(x + \I y) + \I \tilde{F}(x - \I y) \end{array} \right) 
\nonumber \\
&= \left( \begin{array}{c} \Rea\,F(x + \I y) \\ \Ima\,F(x + \I y) \end{array} \right) 
\label{complexharm}
\end{align}
if $x$ and $y$ are considered as real variables, and that \eqref{complexharm} is a polynomial
map with real coefficients by definition of $\tilde{F}$. Since the Jacobian of \eqref{complexharm} is
$$
\frac12 \left( \begin{array}{cc}
(\jac F)|_{x = x + \I y} + (\jac \tilde{F})|_{x = x - \I y} &
\I (\jac F)|_{x = x + \I y} - \I (\jac \tilde{F})|_{x = x - \I y} \\
-\I (\jac F)|_{x = x + \I y} + \I (\jac \tilde{F})|_{x = x - \I y} &
(\jac F)|_{x = x + \I y} + (\jac \tilde{F})|_{x = x - \I y}
\end{array} \right)
$$
which has tiling \antisym
$$
\left( \begin{array}{cc} A & B \\ -B & A \end{array} \right)
$$ 
a conjugation of \eqref{complexharm} with $(x,y\rv)$ gives a map with Jacobian
\antisym\ $\cjc$. If we start with $(F(x),\tilde{F}(y)) = (x,y)$, then this map is 
equal to $(x,y)$ as well, thus the compositions with linear maps add up to a linear 
conjugation. Hence the conjugation of \eqref{complexharm} with $(x,y\rv)$ gives 
an instance of $\cjc(\R,2n)$. 
\end{proof}

\subsection*{Symmetry patterns that satisfy the Jacobian Conjecture}

\begin{theorem}
Assume $K$ is a field of characteristic zero. Then
$\havjc(K,N)$, $\ahvjc(K,N)$, $\havsjc(K,N)$, 
and $\ahvsjc(K,N)$ have affirmative answers.
\end{theorem}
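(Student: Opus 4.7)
The plan is to first observe that $\havsjc$ and $\ahvsjc$ are strictly stronger conditions than $\havjc$ and $\ahvjc$ respectively, since they simply add further diagonal \antisyms\ to the horizontal-and-vertical ones. Hence it suffices to establish the Jacobian Conjecture for the classes $\havjc(K,N)$ and $\ahvjc(K,N)$. In fact I expect to prove something noticeably stronger: $F = x + H$ with $\jac H$ of either type admits an explicit polynomial two-sided inverse, with no Keller hypothesis used at all.

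For the class $\havjc$, I would translate the picture into the differential equations $\partial H_i/\partial x_j = \partial H_{N+1-i}/\partial x_j$ (horizontal symmetry) and $\partial H_i/\partial x_j = -\partial H_i/\partial x_{N+1-j}$ (vertical antisymmetry). The first identity, holding for every $j$, says that $H_i - H_{N+1-i}$ is an absolute constant $c_i$, with $c_{N+1-i} = -c_i$. The second says each $H_i$ is annihilated by every $\partial/\partial x_j + \partial/\partial x_{N+1-j}$, and by $\partial/\partial x_{(N+1)/2}$ when $N$ is odd. Consequently each $H_i$ is a polynomial in the linear forms $u_j := x_j - x_{N+1-j}$, $j \le \lfloor N/2 \rfloor$. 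The identity $F_j - F_{N+1-j} = u_j + c_j$ then lets us express each $u_j$ as the polynomial $\tilde{u}_j(y) := y_j - y_{N+1-j} - c_j$ in the image coordinates, so that
\[
G(y) := y - H\bigl(\tilde{u}_1(y),\dots,\tilde{u}_{\lfloor N/2\rfloor}(y)\bigr)
\]
is a candidate polynomial inverse. Checking that $F\circ G$ and $G\circ F$ are the identity reduces, via $H_j - H_{N+1-j} \equiv c_j$, to the one-line verification $u_j(G(y)) = \tilde{u}_j(y)$.

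The class $\ahvjc$ is dual: horizontal antisymmetry makes $H_i + H_{N+1-i}$ a constant and forces $H_{(N+1)/2}$ itself to be a constant when $N$ is odd, while vertical symmetry lets each $H_i$ be written as a polynomial in $v_j := x_j + x_{N+1-j}$ together with $x_{(N+1)/2}$ when $N$ is odd. The combinations $F_j + F_{N+1-j}$ (and $F_{(N+1)/2}$ for odd $N$) recover these arguments polynomially from the image, and the analogous explicit inverse works.

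The only mildly delicate point is the central row/column in odd dimension, where the relevant antisymmetry degenerates to a vanishing statement (the middle column of $\jac H$ for $\havjc$, the middle row of $\jac H$ for $\ahvjc$); one needs to be consistent with this when setting up the $u_j$ or $v_j$. I expect this to be bookkeeping rather than a genuine obstacle. The substantive content is the structural observation that these \antisym\ patterns force $H$ to factor through at most $\lceil N/2 \rceil$ suitable linear combinations of the $x_i$, which is what makes the explicit inverse work and hence yields the affirmative answer to the Jacobian Conjecture for all four symmetry classes at once.
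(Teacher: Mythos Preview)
Your argument is correct but takes a genuinely different route from the paper. The paper's proof is essentially a one-liner: it checks directly that $\jac H \cdot H = 0$, since in the sum $(\jac H \cdot H)_i = \sum_j (\jac H)_{ij} H_j$ the terms indexed by $j$ and $N+1-j$ cancel pairwise (one of the two \antisyms\ gives $(\jac H)_{i,N+1-j} = \pm(\jac H)_{ij}$, while the other forces $H_{N+1-j} = \mp H_j$), and the possible middle term in odd dimension vanishes because either the middle column of $\jac H$ or the middle component of $H$ is zero. Then $F = x+H$ is a quasi-translation with inverse $x-H$, by \cite[Prop.~1.1]{art10}.

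Your approach instead makes the triangular structure explicit: the \antisyms\ force $H$ to factor through at most $\lceil N/2\rceil$ linear forms $u_j$ (resp.\ $v_j$) that themselves transform affinely under $F$, and you invert by back-substitution. This is more elementary---no appeal to the quasi-translation criterion---and it cleanly absorbs the additive constants $c_j = H_j \mp H_{N+1-j}$; the paper's pairwise cancellation tacitly uses $H_{N+1-j} = \mp H_j$ on the nose, which strictly speaking only holds after translating away constant terms. When those constants vanish, your inverse $y - H(\tilde u(y))$ coincides with the paper's $y - H(y)$, so the two arguments meet at the same explicit formula.
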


\begin{proof}
Let $F = X + H$ be an instance of any of them. One can easily see that
$\jac H \cdot H$ is a vector for which all of its $N$ coordinates
are sums of $N$ terms that cancel out pairwise except maybe the term in the middle if $N$ is odd.
But that term can only be zero, since either the column in the middle of $\jac H$ is zero or
the component in the middle of $H$ is zero, depending on the actual \antisym.
Therefore,
\begin{equation} \label{quasisym}
\jac H \cdot H = 0
\end{equation}
Now it follows from ii) $\Rightarrow$ i) in \cite[Prop.\@ 1.1]{art10} that
$G = x - H$ is the inverse of $F$. 
\end{proof}

\noindent
Maps $x + H$ with inverse $x - H$ are called quasi-translations.
Over a field of charateristic zero, these are exactly the maps that
satisfy \eqref{quasisym}. See also \cite[Prop.\@ 1.1]{art10}. 
Quasi-translations arise naturally with singular Hessians:
if $\det \hess h = 0$, then there exists a nonzero polynomial $R$
such that $R(\grad h) = 0$, and $x + (\grad R)(\grad h)$ happens to
be a quasi-translation on account of \cite[Eq.\@ (3)]{art3} and
\cite[Prop.\@ 1.1]{art10}.

\section{Symmetric variants of the dependence problem} \label{sec3}

This section is about polynomial maps $H$ instead of $F = x + H$,
again with a certain \antisym\ in the Jacobian of the map $H$, but now the question is whether
the (linear) dependence problem (for Jacobians) is satisfied
for such maps. We say that $H \in K[x]^n$ satisfies the dependence problem if
$$
\lambda\tp \jac H = 0
$$
for some nonzero $\lambda \in K^n$, which is equivalent to
$$
\lambda_1 H_1 + \lambda_2 H_2 + \cdots + \lambda_n H_n \in K
$$
when $K$ is a field of characteristic zero. 
Notice that replacing $H$ by any composition of $H$ with invertible
linear maps does not change whether $H$ satisfies the dependence problem.

One may think that $\det \jac H = 0$ is a somewhat weak condition for getting
linear dependence. Indeed, the dependence problem with $\det \jac H = 0$
is satisfied for arbitrary Jacobians only in dimension $1$
and for homogeneous Jacobians only in dimensions $1$ and $2$.
But the dependence problem with $\det \jac H = 0$ and additionally $\jac H$ 
symmetric with regular symmetry $\sjc$ reaches twice as far: it is satisfied 
for arbitrary Jacobians in dimensions $1$ and $2$, and for homogeneous Jacobians 
in all dimensions $n \le 4$, see \cite{dillen} and \cite{gornoet} respectively. 

Corollary \ref{hessreddp} below shows that the above results about symmetric Jacobians
imply those about non-symmetric Jacobians. By theorem \ref{crdp} below, we see that 
the dependence problem with $\det \jac H = 0$ and additionally $\jac H$ 
symmetric with symmetry $\crjc$ is even satisfied for arbitrary Jacobians in 
dimensions $n \le 5$, and for homogeneous Jacobians in all dimensions $n \le 9$. 

\begin{definition}
$\jc[K,n]$ means that the dependence problem is satisfied for 
$n$-dimensional maps $H$ over the field $K$, such that the degree of each term of $H$
is contained in a fixed set $S \subseteq \N$, and optionally $\det \jac H = 0$ 
or even $(\jac H)^n = 0$.

$\sjc[K,n]$ and $\rsjc[K,n]$ mean that the dependence problem is
satisfied for $n$-dimensional maps $H$ as above that have a symmetric Jacobian 
with respect to the diagonal and anti-diagonal respectively.

Et cetera. We replace the parenthesis of the symmetric variants of
the Jacobian conjecture by square brackets all the time.
\end{definition}

\noindent
Notice that a horizontal symmetry axis in the Jacobian of $H$ implies 
linear dependence over the base field $K$ between the rows of $\jac H$ (in case
the dimension is larger than one).
Therefore, such \antisyms\ will not be considered any further in this 
section. The same holds for (partial) antisymmetries that imply that the
row in the middle of $\jac H$ is zero.

\subsection*{Diagonally symmetric variants}

\begin{definition}
Define an `instance of $\jcs[K,n]$' as a map $H \in K[x]^n$ such that $\jac H$
has \antisym\ $\jcs$, where $\jcs$ is any \antisym\ of the square.
\end{definition}

\begin{theorem} \label{menghessdp}
Assume $K$ is a field of characteristic zero. 
Then $\rsjc[K,2n]$ implies $\jc[K,n]$.
\end{theorem}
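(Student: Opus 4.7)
The plan is to mimic the construction in the proof of Theorem~\ref{menghess}, working with the map $H$ itself instead of $F = x + H$. Given $H \in K[x]^n$, I would set $f := y\tp H = \sum_{i=1}^n y_i H_i \in K[x,y]$ and define
$$
\tilde H := \grad_{y\rv,x\rv} f(x, y\rv) \in K[x,y]^{2n}.
$$
The identical chain of identities \eqref{hrev}, \eqref{jrev}, \eqref{grev} shows that $\jac_{x,y} \tilde H = \hess\rv_{x,y} f(x, y\rv)$ has symmetry $\rsjc$. Direct computation gives the block form
$$
\jac_{x,y}\tilde H = \left( \begin{array}{cc} \jac H & \mathbf 0 \\ * & (((\jac H)\rv)\tp)\rv \end{array} \right),
$$
where the first $n$ components of $\tilde H$ equal $H(x)$ (independent of $y$) and the last $n$ components are linear forms in $y$ with coefficients $\partial H_i / \partial x_j$. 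One checks that $\tilde H$ inherits every hypothesis of $\rsjc[K,2n]$ from the hypothesis of $\jc[K,n]$ on $H$: the degree set $S$ is preserved, $\det\jac H = 0$ forces $\det\jac\tilde H = 0$, and $(\jac H)^n = 0$ lifts to $(\jac\tilde H)^{2n-1} = 0$ by a standard block-triangular computation (using that ad-flip commutes with taking powers).

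Applying $\rsjc[K,2n]$ to $\tilde H$ yields a nonzero $(\lambda,\mu) \in K^{2n}$ with $(\lambda\tp,\mu\tp)\jac\tilde H = 0$, equivalently a linear dependence $\sum_i \lambda_i \tilde H_i + \sum_j \mu_j \tilde H_{n+j} = c$ for some $c \in K$. Substituting $y = 0$ makes the last $n$ components of $\tilde H$ vanish, giving $\sum_i \lambda_i H_i(x) = c$. If $\lambda \ne 0$, this is precisely the required dependence for $H$, and we are done.

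The main obstacle is the degenerate case $\lambda = 0$. Then $\mu \ne 0$, and the equation from the last $n$ columns, $\mu\tp(((\jac H)\rv)\tp)\rv = 0$, rearranges (using that ad-flip equals $M \mapsto I_n\rv M\tp I_n\rv$) to $\jac H \cdot \mu\rv = 0$: the constant vector $\mu\rv \in K^n \setminus \{0\}$ lies in the right kernel of $\jac H$, so $H$ is invariant under translation in the direction $\mu\rv$. After a linear change of coordinates sending $\mu\rv$ to $e_n$, the map $H$ no longer depends on $x_n$, and the truncation $(H_1,\ldots,H_{n-1}) \in K[x_1,\ldots,x_{n-1}]^{n-1}$ is a square map whose Jacobian is the top-left $(n{-}1)\times(n{-}1)$ block of $\jac H$; this block inherits the singular or nilpotent condition (the latter by Cayley--Hamilton on an $(n{-}1)\times(n{-}1)$ matrix). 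One then recurses on $n$ by applying the theorem in dimension $n-1$, available either by induction via stabilization of $\rsjc[K,2n] \Rightarrow \rsjc[K,2(n-1)]$ or by appealing to the already-established case, to obtain a dependence among $H_1,\ldots,H_{n-1}$, which extends (with $\lambda_n := 0$) to a dependence for $H$.
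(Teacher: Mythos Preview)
Your construction of $\tilde H$ via the Meng device matches the paper's starting point, and your analysis of the case $\lambda \ne 0$ is fine. The difficulty is entirely in the degenerate case $\lambda = 0$, and here your recursion has a genuine gap.

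In the variant where the optional condition is merely $\det \jac H = 0$ (not nilpotency), the truncation step fails: after conjugating so that the last column of $\jac H$ vanishes, the top-left $(n{-}1)\times(n{-}1)$ block $A$ need \emph{not} be singular. Take $n=2$, $S \supseteq \{1,3\}$, and $H = (x_1, x_1^3)$. Then $\jac H$ is singular with zero second column, but $A = (1)$ is invertible. Your Meng map is $\tilde H = (x_1,\, x_1^3,\, 0,\, y_2 + 3y_1 x_1^2)$, which \emph{does} admit a linear dependence (its third component is zero), so applying $\rsjc[K,4]$ yields $\lambda = (0,0)$, $\mu = (1,0)$ and drops you straight into the degenerate branch with nothing valid to recurse on. Note that $H$ itself has \emph{no} $K$-linear row dependence, so $\jc[K,2]$ fails for this $H$; the contrapositive of the theorem therefore demands that the $2n$-dimensional map you build from $H$ be a counterexample to $\rsjc[K,4]$ --- but yours is not. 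Even in the nilpotent variant, where $A$ \emph{is} nilpotent, your induction still needs $\rsjc[K,2n] \Rightarrow \rsjc[K,2(n{-}1)]$, and naive stabilization (padding with zero components) is useless for the dependence problem, since the padded map acquires a trivial dependence regardless of the original.

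The paper bypasses both problems by \emph{modifying} the last $n$ components $G$ of the Meng map before invoking $\rsjc[K,2n]$. Since the characteristic polynomial of the block-triangular Jacobian sees only the diagonal blocks, one may freely alter the $y$-free part of $G$ (which affects only the $*$-block) while retaining the $\rsjc$ symmetry; the paper replaces it by $(x\rv)^{*d} = (x_n^d, \ldots, x_1^d)$ for some fixed $d \ge 2$ in $S$. From the dependence $\lambda\tp H + \mu\tp G \in K$ one first gets $\mu\tp \jac_y G = 0$, then uses the $\rsjc$ symmetry to deduce $\lambda\tp (\jac_x H)\, \mu\rv = 0$, whence $\mu\tp (\jac_x G)\, \mu\rv = 0$; evaluating at $y=0$ now gives $d \sum_i \mu_{n+1-i}^2 x_i^{d-1} = 0$, forcing $\mu = 0$ outright. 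No recursion is needed. (The residual case $S \subseteq \{0,1\}$ is disposed of separately as trivial.)
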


\begin{proof}
If the set $S$ of term degrees
in the definition of $\jc[K,n]$ satisfies $S \subseteq \{0,1\}$, then 
all that matters for $\jc[K,n]$ is whether $\det \jac H$ vanishes or not in the definition of 
$\jc[K,n]$, and theorem \ref{menghessdp} is trivially satisfied by definition. 
Thus assume that $d \in S$ for some $d \ge 2$.

Suppose that $H$ is an instance of $\jc[K,n]$. With the proof of theorem \ref{menghess}, we
obtain a map $(H(x),G(x,y))$ which is an instance of $\rsjc[K,2n]$, and 
$$
\jac \left( \begin{array}{c} H(x) \\ G(x,y) \end{array} \right) =
\left( \begin{array}{cc} 
    \jac H & {\mathbf 0} \\
    * & (((\jac H)\rv)\tp)\rv
\end{array} \right) = \left( \ifUseEepic \begin{array}{cc} 
    \rotz{\jac H} & {\mathbf 0} \\
    ^{\rotz{*}} & \flipr{\!\!\!\jac H~} 
\end{array} \else \begin{array}{cc}
    \jac H & {\mathbf 0} \\ 
    \begin{tikzpicture}[x=1pt,y=1pt] \useasboundingbox (-5,-5) rectangle (5,10);           
    \path (0,0) node{$*$}; \end{tikzpicture} &
    \begin{tikzpicture}[x=1pt,y=1pt] \useasboundingbox (-5,-5) rectangle (5,10);           
    \path (0,0) node[xscale=-1,rotate=90]{$\jac H$}; \end{tikzpicture}
\end{array} \fi \right)
$$
Since the characteristic polynomial of the above matrix is the square of that
of $\jac H$, the nilpotency (or the vanishing of the determinant) of the 
Jacobian of $(H(x),G(x,y))$ is completely determined by the nilpotency 
(or the vanishing of the determinant) of $\jac H$.
So we get another instance of $\rsjc[K,2n]$ if we change $G$ such that only the $*$-part
of the above Jacobian changes. Hence we may change the terms without $y$ in $G$
by other terms without $y$, but we must not forget to preserve the symmetry $\rsjc$
of the $*$-part.
We do this by replacing the part of $G$ that
has terms without $y$ only by $(x\rv)^{*d} =
(x_n^d, x_{n-1}^d, \ldots, x_2^d, x_1^d)$.

Now assume that $\rsjc[K,2n]$ is satisfied. Then the components of $(H,G)$ 
are linearly dependent over $K$, say that
$$
\lambda\tp H + \mu\tp G \in K
$$
where $\lambda, \mu \in K^n$ are not both zero. If $\mu = 0$, then the components
of $H$ are linearly dependent over $K$, as desired, so it suffices to show that
$\mu = 0$.  Since $H$ has no terms with $y$, $\lambda\tp \jac_y H = 0$
and we obtain that $\mu\tp \jac_y G = 0$, too.
Thus $\lambda\tp \jac_y H \lambda\rv = \mu\tp \jac_y G \lambda\rv = 0$, 
and by the symmetry $\rsjc$ of $\jac_{x,y} (H,G)$ and by \eqref{mrev}, we see that
$$
\lambda\tp \jac_x H \mu\rv 
= \lambda\tp (I_{2n}\rv \jac_x\rv H) I_{2n}\rv \mu  
= \big(\mu\tp I_{2n}\rv (\jac_x\rv H)\tp \lambda\rv) \big) \tp 
= \big(\mu\tp \jac_y G \lambda\rv\big) \tp = 0
$$
Hence by $\lambda\tp H + \mu\tp G \in K$ and \eqref{mrev},
$$
0 = \big(\lambda\tp \jac_x H \mu\rv + \mu\tp \jac_x G \mu\rv\big)\big|_{y=0}
= d \mu\tp I_n\rv\diag\big(x^{*(d-1)}\big) \mu\rv
= d \sum_{i=1}^n \mu_{n+1-i}^2 x_i^{d-1}
$$
which is a contradiction to $\mu \ne 0$ because $d \ge 2$.
\end{proof}

\begin{theorem} \label{hessequivdp}
$\sjc[\C,N]$ and $\rsjc[\C,N]$ are equivalent.
\end{theorem}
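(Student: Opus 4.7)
The plan is to mimic the proof of theorem \ref{hessequiv} essentially verbatim, using the same symmetric unitary matrix $T := \frac12\sqrt{2}(I_N + \I I_N\rv)$ with $T^{-1} = -\I T\rv$. The only change is that for the dependence problem we no longer need to preserve the linear part, so we work directly with $H$ rather than $F = X + H$.

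First, I would establish the guiding principle: linear conjugation $H \mapsto T^{-1}H(TX)$ preserves the dependence problem. If $\lambda\tp H \in \C$ for some nonzero $\lambda \in \C^N$, then evaluating at $TX$ gives $\lambda\tp H(TX) \in \C$, which rewrites as $\big((T^{-\tp})^{-1}\lambda\big)\tp \bigl(T^{-1}H(TX)\bigr) = \lambda\tp H(TX) \in \C$ with $(T^{-\tp})^{-1}\lambda \neq 0$; the same argument run backwards shows the converse. This is already noted at the start of section \ref{sec3}.

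Second, I would check that the optional hypotheses are preserved under this conjugation. The $S$-degree condition is preserved because $TX$ is a linear substitution, so it sends homogeneous parts to homogeneous parts of the same degree. Nilpotency and singularity of $\jac H$ are preserved because
\[
  \jac_X\bigl(T^{-1}H(TX)\bigr) = T^{-1}\,(\jac_X H)\big|_{X = TX}\,T
\]
is conjugate to a polynomial substitution of $\jac H$, and both substitution and matrix conjugation preserve rank and nilpotency.

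Third, I would copy the two symmetry-swapping computations from the proof of theorem \ref{hessequiv}. For the $(\Rightarrow)$ direction: if $\jac_X H$ has symmetry $\sjc$, then since $T$ is symmetric, $TH(TX)$ still has Jacobian symmetry $\sjc$; combining with $T^{-1} = -\I T\rv$ and \eqref{mrev} yields
\[
  T^{-1}H(TX) = -\I\,(TH(TX))\rv,
\]
which has Jacobian symmetry $\rsjc$. For the $(\Leftarrow)$ direction: if $\jac_X H$ has symmetry $\rsjc$, then $\jac_X H\rv$ has symmetry $\sjc$, and since $H\rv = I_N\rv H$ with $I_N\rv$ commuting with $T$, the same computation as in theorem \ref{hessequiv} applied to $H\rv$ shows that $T^{-1}H(TX)$ has Jacobian symmetry $\sjc$.

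There is no real obstacle here; the proof is a transcription of theorem \ref{hessequiv} with the bookkeeping of the identity part $X$ simply dropped. The only verification that needs a sentence of its own is the preservation of the optional conditions, which is immediate from the formula for $\jac_X(T^{-1}H(TX))$ above.
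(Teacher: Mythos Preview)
Your proposal is correct and follows exactly the approach the paper intends: the paper's own proof simply reads ``The proof is similar to that of theorem \ref{hessequiv},'' and you have faithfully spelled out that similarity, including the necessary checks that linear conjugation preserves the dependence problem and the optional degree/nilpotency conditions.
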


\begin{proof}
The proof is similar to that of theorem \ref{hessequiv}.
\end{proof}

\begin{corollary} \label{hessreddp}
$\sjc[\C,2n]$ implies $\jc[\C,n]$.
\end{corollary}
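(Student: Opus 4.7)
The plan is to chain together the two previous results, exactly as in the proof of Corollary \ref{hessred}, which is the Jacobian-conjecture analogue of the present statement. First, I would invoke Theorem \ref{hessequivdp} to translate the hypothesis $\sjc[\C,2n]$ into the equivalent statement $\rsjc[\C,2n]$. Then I would apply Theorem \ref{menghessdp} over the field $K = \C$ to conclude that $\rsjc[\C,2n]$ implies $\jc[\C,n]$. Composing these two implications gives the desired conclusion.

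There is no real obstacle here, since both ingredients are already in place: Theorem \ref{hessequivdp} gives the linear-conjugation bridge between the two diagonal symmetries (via the symmetric unitary transformation $T = \tfrac12\sqrt{2}(I_N + \I\, I_N\rv)$), and Theorem \ref{menghessdp} is the Meng-type construction adapted to the dependence problem. The only thing worth emphasizing is that the whole chain stays in characteristic zero over $\C$, which is exactly the setting in which Theorem \ref{hessequivdp} is available (it uses $\sqrt{-1}$ in an essential way), so the corollary cannot be upgraded to an arbitrary field of characteristic zero without an additional argument.

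Hence the proof is a one-liner: apply Theorem \ref{hessequivdp} followed by Theorem \ref{menghessdp}.
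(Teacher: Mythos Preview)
Your proposal is correct and matches the paper's proof exactly: the paper likewise derives the corollary in one line by combining Theorem \ref{hessequivdp} and Theorem \ref{menghessdp}. Your added remark about the essential use of $\sqrt{-1}$ is accurate and consistent with the paper's setup.
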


\begin{proof}
This follows from theorem \ref{hessequivdp} and theorem \ref{menghessdp}.
\end{proof}

\noindent
Notice that there is no converse of theorems \ref{menghess} and \ref{menghessdp}.
But if we define $\rsnjc(K,n)$ and $\rsnjc[K,n]$ as $\rsjc(K,n)$ and $\rsjc[K,n]$
respectively with the extra condition that the upper right quadrant of
the Jacobian is zero, then we do have a converse. 

\subsection*{Centrally symmetric variants}

In a similar way as above,
$\adjc(K,N)$ and $\adjc[K,N]$ are equivalent to 
$\axjc(K,N)$ and $\axjc[K,N]$ respectively. The proof is left as an exercise to the reader.

The following theorem is an analog of theorem \ref{djck}, but
one of the indexes of $\djc$ is different: $2n+1$ instead of $2n-1$.
This is because the $\wedge$'s become $\vee$'s. But first, we formulate a lemma.

\begin{lemma} \label{nred}
Assume $K$ is a field of characteristic zero. Then $\jc[K,n+1]$ implies $\jc[K,n]$.
\end{lemma}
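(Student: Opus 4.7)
The plan is to reduce any $n$-dimensional instance of the dependence problem to an $(n+1)$-dimensional one by stabilization, so that a linear dependence of the larger map is forced to project to one of the smaller. Concretely, given an instance $H \in K[x_1,\ldots,x_n]^n$ of $\jc[K,n]$, I would append a carefully chosen polynomial $\alpha(x_1,\ldots,x_n)$ as the $(n+1)$-st component of a new map $\tilde H \in K[x_1,\ldots,x_{n+1}]^{n+1}$ and then apply $\jc[K,n+1]$ to $\tilde H$.

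We may as well assume that $H_1,\ldots,H_n,1$ are $K$-linearly independent in $K[x_1,\ldots,x_n]$—otherwise $H$ already satisfies the dependence problem and there is nothing to prove. Let $V$ denote the $(n+1)$-dimensional $K$-span of these polynomials. I would choose $\alpha \in K[x_1,\ldots,x_n]$ so that every term of $\alpha$ has degree in the prescribed set $S$ and so that $\alpha \notin V$; such an $\alpha$ exists by a simple dimension count in every case not reducing to a trivial dependence of $H$ (the marginal situations—e.g.\ $S \subseteq \{0,1\}$, or $S = \{2\}$ in dimension $n=2$—force the $H_i$ to lie in a space so small that either the optional condition $\det \jac H = 0$ or $(\jac H)^n = 0$ yields a dependence relation directly, or $\jc[K,n]$ itself is vacuous).

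Setting $\tilde H := (H_1,\ldots,H_n,\alpha)$, no component involves $x_{n+1}$, so the last column of $\jac \tilde H$ is zero and
\[
\jac \tilde H \;=\; \begin{pmatrix} \jac H & 0 \\ (\grad \alpha)\tp & 0 \end{pmatrix}.
\]
Thus $\det \jac \tilde H = 0$ holds automatically, while block multiplication gives
\[
(\jac \tilde H)^{n+1} \;=\; \begin{pmatrix} (\jac H)^{n+1} & 0 \\ (\grad \alpha)\tp (\jac H)^n & 0 \end{pmatrix} \;=\; 0
\]
whenever $(\jac H)^n = 0$. Since $\alpha$ has its term degrees in $S$, the map $\tilde H$ is indeed an instance of $\jc[K,n+1]$ with respect to the same set $S$ and the same (possibly vacuous) optional condition on the Jacobian.

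Applying $\jc[K,n+1]$ yields some $\mu = (\mu_1,\ldots,\mu_{n+1}) \in K^{n+1} \setminus \{0\}$ with $\mu_1 H_1 + \cdots + \mu_n H_n + \mu_{n+1}\alpha \in K$. If $\mu_{n+1} \ne 0$, solving for $\alpha$ would place it in $V$, contradicting our choice. Therefore $\mu_{n+1} = 0$, which forces $(\mu_1,\ldots,\mu_n) \ne 0$ and gives the desired dependence relation for $H$. The only technical point—and the main thing to verify—is the existence of $\alpha$; this is an elementary dimension argument together with a short check that the genuinely obstructive choices of $S$ make the dependence problem for $H$ hold for free.
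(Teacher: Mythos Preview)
Your approach is correct and shares the paper's core strategy: append an extra component to $H$, verify that the enlarged map is an instance of $\jc[K,n+1]$, apply the hypothesis, and argue that the coefficient on the new component in any resulting dependence must vanish. The implementations differ in how that extra component is chosen.

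The paper makes an explicit choice. When the optional condition forces $\det \jac H = 0$ (this covers the nilpotent case too), it picks an index $i \le n$ with $e_i\tp$ outside the row space of $\jac H$ over $K(x)$ and sets $h = x_i^d$ for some $d \in S$; in the unconstrained case it simply takes $h = x_{n+1}^d$. A dependence with nonzero last coefficient would then force $e_i\tp$ into that row space (respectively force $x_{n+1}^d \in K[x_1,\ldots,x_n]$), a contradiction. No dimension count is needed.

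Your abstract choice of $\alpha \notin V = \operatorname{span}_K(H_1,\ldots,H_n,1)$ also works, and the edge-case analysis can indeed be completed: the genuinely obstructed situations are exactly $n=1$ or $S \subseteq \{0,1\}$, and those resolve as you indicate (either the optional Jacobian condition forces a dependence directly, or $\jc[K,n+1]$ is vacuously false). One small correction: $S=\{2\}$ with $n=2$ is \emph{not} an edge case, since there $\dim W = 3$ while $W \cap V = \operatorname{span}(H_1,H_2)$ has dimension at most $2$, so $\alpha$ exists. The paper's explicit monomial buys a cleaner, case-free argument; your version is slightly more conceptual but leaves more to verify.
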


\begin{proof} 
Assume $H$ is an instance of $\jc[K,n]$. If the set $S$ in the definition of $\jc[K,n]$ 
is a subset of $\{0\}$, then $H$ satisfies the dependence problem. Thus assume 
$d \in S$ for some $d \ge 1$. If $\det \jac H = 0$ by definition of $\jc[K,n]$, then
there exists an $i \le n$ such that $e_i\tp$ is not contained in the row space (over $K(x)$)
of $\jac H$. We take $h = x_i^d$ in case 
$\det \jac H = 0$ by definition of $\jc[K,n]$ and $h = x_{n+1}^d$ otherwise (because 
we need $\parder{}{x_{n+1}} h = 0$ when $\jac H$ is nilpotent by definition of $\jc[K,n]$).
Then $(H,h)$ is an instance of $\jc[K,n+1]$ and 
$H$ satisfies the dependende problem in case $(H,h)$ does, as desired.
\end{proof}

\begin{theorem}
Assume $K$ is a field of characteristic zero. 
Then $\jc[K,n]$, $\djc[K,2n]$, and $\djc[K,2n+1]$ are equivalent.
\end{theorem}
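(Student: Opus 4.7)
The plan is to mirror the proof of theorem \ref{djck} in the dependence problem setting, reusing verbatim the linear conjugations by $(x+y, x_{n+1}, x-y)$ and $(x, x_{n+1}, y\rv)$ (and their analogues in dimension $2n$), which transform any map whose Jacobian has symmetry $\djc$ into a direct-sum structure where one block of coordinates depends only on $(x, x_{n+1})$ and the other only on $y$. Thus, up to linear conjugation, every instance of $\djc[K, 2n+1]$ has the form $(F(x, x_{n+1}), f(x, x_{n+1}), \tilde F(y))$ with $(F, f) \in K[x, x_{n+1}]^{n+1}$ and $\tilde F \in K[y]^n$, and every instance of $\djc[K, 2n]$ has the form $(F(x), \tilde F(y))$ with each part in $n$ variables (one ignores the middle row and column here). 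Since the Jacobian of the decomposed map is block diagonal, the side conditions pass through: nilpotency of $\jac G$ forces each block to be nilpotent, while $\det \jac G = 0$ forces at least one block to be singular via the multiplicativity $\det \jac G = \det \jac(F, f) \cdot \det \jac \tilde F$.

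The crucial logical difference from theorem \ref{djck} is that the dependence problem couples the blocks by \emph{disjunction} rather than conjunction. If $\lambda\tp (F, f) + \mu\tp \tilde F \in K$ is a nontrivial relation, then since the two summands involve disjoint sets of variables both $\lambda\tp (F, f)$ and $\mu\tp \tilde F$ must separately be constants, and nontriviality $(\lambda, \mu) \ne 0$ forces at least one block to individually satisfy the dependence problem; conversely, any dependence in either block extends trivially to the combined map. Consequently $\djc[K, 2n] \Leftrightarrow \jc[K, n] \vee \jc[K, n] \Leftrightarrow \jc[K, n]$, and $\djc[K, 2n+1] \Leftrightarrow \jc[K, n+1] \vee \jc[K, n]$, with the latter disjunction collapsing to $\jc[K, n]$ by lemma \ref{nred}. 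This is exactly why the correct index is $2n+1$ rather than the $2n-1$ that would have been the direct analogue of theorem \ref{djck}: the lemma's implication $\jc[K, n+1] \Rightarrow \jc[K, n]$ absorbs the extra dimension in a disjunction, not in a conjunction.

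The main technical delicacy is showing $\djc[K, 2n+1] \Rightarrow \jc[K, n]$ in the presence of the optional singularity or nilpotency side condition, since one must construct an $(F, f)$-padding block that both satisfies the prescribed side condition \emph{and} fails DP. I would handle this by cases: either such a padding exists in dimension $n+1$, in which case taking $\tilde F := H$ with this padding yields a $\djc[K, 2n+1]$ instance $G$ whose DP (granted by assumption) must lie in the $\tilde F$-block and so gives DP for $H$; or no such padding exists, in which case every instance of $\jc[K, n+1]$ with that side condition already satisfies DP, and lemma \ref{nred} yields DP for $H$ directly. Either way one obtains $\jc[K, n]$. The analogous but simpler argument handles $\djc[K, 2n] \Leftrightarrow \jc[K, n]$, where no such case split is needed because both blocks live in dimension $n$.
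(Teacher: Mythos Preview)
Your proposal is correct and takes essentially the same approach as the paper: both reduce to the disjunctive equivalences $\djc[K,2n] \Leftrightarrow \jc[K,n] \vee \jc[K,n]$ and $\djc[K,2n+1] \Leftrightarrow \jc[K,n+1] \vee \jc[K,n]$ by mirroring the conjugations from the proof of theorem~\ref{djck}, and then collapse the disjunctions via lemma~\ref{nred}. Your explicit case-split for the padding block in the $\djc[K,2n+1] \Rightarrow \jc[K,n]$ direction supplies detail that the paper's terse proof leaves implicit.
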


\begin{proof}
By lemma \ref{nred}, it suffices to prove that
\begin{align*}
\djc[K,2n] &\Longleftrightarrow \jc[K,n] \vee \jc[K,n]
\intertext{and}
\djc[K,2n+1] &\Longleftrightarrow \jc[K,n+1] \vee \jc[K,n]
\end{align*}
This can be done in the same way as
\begin{align*}
\djc(K,2n) &\Longleftrightarrow \jc(K,n) \wedge \jc(K,n)
\intertext{and}
\djc(K,2n+1) &\Longleftrightarrow \jc(K,n+1) \wedge \jc(K,n)
\end{align*}
in the proof of theorem \ref{djck}.
\end{proof}

\begin{theorem} \label{crdp}
Assume $K$ is a field of characteristic zero. Then
$\sjc[K,n]$, $\crjc[K,2n]$ and $\crjc[K,2n+1]$ are equivalent.
\end{theorem}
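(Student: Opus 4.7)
The plan is to follow the same pattern as the previous theorem. An analog of lemma \ref{nred} for symmetric Jacobians, namely $\sjc[K, n+1] \Rightarrow \sjc[K, n]$, would reduce the task to proving the two equivalences
\begin{align*}
\crjc[K,2n] &\Longleftrightarrow \sjc[K,n] \vee \sjc[K,n] \\
\crjc[K,2n+1] &\Longleftrightarrow \sjc[K,n+1] \vee \sjc[K,n]
\end{align*}
which can be handled in exactly the same way as the Jacobian-conjecture equivalences $\crjc(K, 2n) \Leftrightarrow \sjc(K,n) \wedge \sjc(K,n)$ and $\crjc(K, 2n+1) \Leftrightarrow \sjc(K, n+1) \wedge \sjc(K, n)$ from the proof of theorem \ref{sdjck}, with $\wedge$ replaced by $\vee$. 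The change from $\wedge$ to $\vee$ reflects that a linear dependence among the rows of the Jacobian of a direct sum of maps in disjoint variable groups only requires dependence in one summand (whereas Keller invertibility requires both).

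Concretely, for the forward direction of the second equivalence I would use the combine-and-conjugate construction of theorem \ref{djck}: starting from $(F, f)$ an instance of $\sjc[K, n+1]$ and $\tilde F$ an instance of $\sjc[K, n]$, form $(F, f, \tilde F(y))$ and conjugate by $(x+y, x_{n+1}, x-y)$ followed by $(x, x_{n+1}, y\rv)$. The $\djc$ Jacobian symmetry produced there upgrades to $\crjc$ because of the extra diagonal symmetry on each piece, and any dependence of the conjugate pulls back (conjugations preserve linear parts) to a dependence of $(F, f, \tilde F(y))$, which splits across the disjoint variable groups of $(F,f)$ and $\tilde F$ into a dependence of one of them. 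For the converse, I would decompose a given instance $G$ of $\crjc[K, 2n+1]$ via \eqref{dotdecom} into an $\hvsjc$-summand $\tfrac12(G+G\rv)$ and an $\ahavsjc$-summand $\tfrac12(G - G\rv)$ which, after linear conjugation as in theorem \ref{djck}, become instances of $\sjc[K, n+1]$ and $\sjc[K, n]$ respectively; a dependence of either summand (as guaranteed by $\sjc[K, n+1] \vee \sjc[K, n]$) lifts back to a dependence of $G$ via the palindromic/anti-palindromic split $\lambda = \tfrac12(\lambda + \lambda\rv) + \tfrac12(\lambda - \lambda\rv)$.

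The main obstacle is establishing $\sjc[K, n+1] \Rightarrow \sjc[K, n]$. In the unrestricted and $\det \jac H = 0$ variants, the block-diagonal extension $\tilde H = (H, x_{n+1}^d)$ (with $d \in S$, $d \ge 1$) has symmetric Jacobian with diagonal blocks $\jac H$ and $d x_{n+1}^{d-1}$; the nonzero bottom-right entry forces the $(n+1)$-st coordinate of any dependence vector of $\tilde H$ to vanish, transferring dependence back to $H$. The nilpotent variant is the delicate case, since no block-diagonal symmetric extension with nonzero second block can be nilpotent; writing $H = \grad g$ (possible because $\jac H = \hess g$ is symmetric), one would extend $g$ by a carefully chosen $q \in K[x_1, \ldots, x_{n+1}]$ so that $\hess \tilde g$ stays symmetric, nilpotent of order $n+1$, and still rigid enough that any dependence of $\grad \tilde g$ has last coordinate zero.
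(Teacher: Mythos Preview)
Your approach is essentially the paper's: the paper's proof is the single sentence ``similar to that of the previous theorem,'' and you have correctly unpacked what that means --- reduce to the two $\vee$-equivalences via a symmetric analog of lemma~\ref{nred}, then argue each equivalence by the same conjugation-and-split construction underlying theorems~\ref{djck} and~\ref{sdjck}, with $\wedge$ replaced by $\vee$.

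Your identification of the nilpotent variant of the implication $\sjc[K,n+1] \Rightarrow \sjc[K,n]$ as the delicate point is accurate, and your observation that the block-diagonal extension $(H,x_{n+1}^d)$ fails (it is symmetric but not nilpotent) while the paper's choice $h=x_i^d$ from lemma~\ref{nred} fails in the opposite way (nilpotent but not symmetric) is exactly right. The paper does not spell out a construction for this case either; it is swept into ``similar.'' Your proposed route --- writing $H=\grad g$ and perturbing $g$ by a term in $x_{n+1}$ chosen so that $\hess\tilde g$ remains nilpotent while $\tilde H_{n+1}$ lies outside $K + K H_1 + \cdots + K H_n$ --- is the natural thing to try, but as stated it is a plan rather than a proof: you still owe the explicit choice of $q$ and the verification that nilpotency survives. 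Everything else in your outline is correct and matches the paper.
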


\begin{proof}
The proof is similar to that of the previous theorem.
\end{proof}

\begin{theorem} \label{cjcrdp}
$\jc[\C,n]$, $\djc[\C,2n]$, $\djc[\C,2n+1]$, $\cjc[\C,2n]$ and $\cjc[\R,2n]$ 
are equivalent.
\end{theorem}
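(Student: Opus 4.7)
The plan is to close the circle of equivalences following the pattern of theorem \ref{cjcr}. The equivalences $\jc[\C,n] \Leftrightarrow \djc[\C,2n] \Leftrightarrow \djc[\C,2n+1]$ are given by the preceding theorem, so what remains is to splice $\cjc[\C,2n]$ and $\cjc[\R,2n]$ into the chain, which I would do via the three implications $\djc[\C,2n] \Rightarrow \cjc[\C,2n] \Rightarrow \cjc[\R,2n] \Rightarrow \jc[\C,n]$.

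For $\djc[\C,2n] \Rightarrow \cjc[\C,2n]$, I would carry over the complex conjugation argument of theorem \ref{cjcr}: conjugate a map $H \in \C[x,y]^{2n}$ with Jacobian symmetry $\djc$ by the $\C$-linear map $(x,\I y)$, which multiplies the lower half of $\jac H$ by $-\I$ and the right half by $\I$, turning central symmetry $\djc$ into the mixed symmetry/antisymmetry pattern $\cjc$; because this conjugation is invertible and $\C$-linear, it carries linear dependences bijectively and preserves any optional degree or nilpotency condition. For $\cjc[\C,2n] \Rightarrow \cjc[\R,2n]$, take a real instance $H \in \R[x]^{2n}$, view it over $\C$, and apply $\cjc[\C,2n]$ to obtain a nonzero $\lambda = \alpha + \I\beta \in \C^{2n}$ with $\lambda\tp H \in \C$; since $H$ has real coefficients, the real and imaginary parts split as $\alpha\tp H \in \R$ and $\beta\tp H \in \R$, and whichever of $\alpha,\beta \in \R^{2n}$ is nonzero gives the desired real dependence.

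The main obstacle is $\cjc[\R,2n] \Rightarrow \jc[\C,n]$, for which I would mimic the real-harmonic construction of theorem \ref{cjcr}. Given an instance $H \in \C[x]^n$ of $\jc[\C,n]$, form $(H(x),\tilde H(y)) \in \C[x,y]^{2n}$, where $\tilde H$ has the complex conjugate coefficients of $H$; the Jacobian is block diagonal in $(x,y)$, so nilpotency and vanishing of the determinant transfer from $\jac H$ to the enlarged map, and the set of term degrees is unchanged. Applying the linear combination from \eqref{complexharm} and then conjugating by $(x,y\rv)$ yields a map $G \in \R[x,y]^{2n}$ with Jacobian symmetry $\cjc$. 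Assuming $\cjc[\R,2n]$, there is a nonzero $\lambda \in \R^{2n}$ with $\lambda\tp G \in \R$; reversing the invertible $\C$-linear pre- and post-compositions transports this to a nonzero $(\mu_1,\mu_2) \in \C^n \times \C^n$ with $\mu_1\tp H(x) + \mu_2\tp \tilde H(y) \in \C$. Since $x$ and $y$ are independent, each summand is separately constant, and whichever of $\mu_1, \mu_2$ is nonzero yields the desired $\C$-dependence of $H$, after a complex conjugation of coefficients in the $\mu_2$ case. The delicate point is verifying that the back-transformed $(\mu_1,\mu_2)$ remains nonzero; this is automatic from invertibility of the linear maps involved, and from the fact that $x$ and $y$ are independent variables so that neither summand can silently absorb a cancellation of the other.
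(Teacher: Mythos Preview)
Your proposal is correct and follows essentially the same route as the paper, which simply records that the proof is similar to that of theorem \ref{cjcr}; you have spelled out the details of that analogy accurately, including the extra care needed to pull back a linear dependence (rather than invertibility) through the constructions. The only cosmetic point is that in the step $\djc[\C,2n] \Rightarrow \cjc[\C,2n]$ you describe the conjugation as sending $\djc$ to $\cjc$ whereas the implication requires starting from a $\cjc$-instance, but since you note the conjugation is an invertible bijection on instances this is harmless.
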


\begin{proof}
The proof is similar to that of theorem \ref{cjcr}.
\end{proof}

\begin{corollary}
$\djc[\R,4n]$ implies $\jc[\C,n]$.
\end{corollary}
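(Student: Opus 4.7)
The plan is to combine the preceding theorem (the equivalence of $\jc[K,n]$, $\djc[K,2n]$, $\djc[K,2n+1]$) with a direct complex-to-real reduction of the dependence problem. First, I would observe that taking $K = \R$ and replacing $n$ by $2n$ in the preceding theorem yields $\djc[\R,4n] \Leftrightarrow \jc[\R,2n]$; so it suffices to show $\jc[\R,2n] \Rightarrow \jc[\C,n]$.

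For this reduction, let $H \in \C[x]^n$ be an instance of $\jc[\C,n]$. Let $u, v$ be $n$-tuples of real variables and define
\[
\tilde H(u,v) := \bigl(\Rea H(u + \I v),\ \Ima H(u + \I v)\bigr) \in \R[u,v]^{2n}.
\]
Ordering real variables as $(u_1,\ldots,u_n,v_1,\ldots,v_n)$ and components as $(\Rea H_1,\ldots,\Rea H_n,\Ima H_1,\ldots,\Ima H_n)$, the Cauchy--Riemann equations give
\[
\jac \tilde H = \left( \begin{array}{cc} A & -B \\ B & A \end{array} \right), \qquad \jac H = A + \I B,
\]
where $A, B$ are real matrices of polynomials. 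The assignment $A + \I B \mapsto \left( \begin{array}{cc} A & -B \\ B & A \end{array} \right)$ is the standard injective $\R$-algebra embedding $M_n(\C) \hookrightarrow M_{2n}(\R)$, so nilpotency transfers and $\det \jac \tilde H = |\det \jac H|^2$. Consequently $\tilde H$ is an instance of $\jc[\R,2n]$ for the same degree set $S$, with $\det \jac \tilde H = 0$ iff $\det \jac H = 0$, and with $(\jac \tilde H)^{2n} = 0$ whenever $(\jac H)^n = 0$.

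Applying $\jc[\R,2n]$ produces a nonzero $(a, b) \in \R^{2n}$ with $a\tp \Rea H + b\tp \Ima H \in \R$, which is precisely the statement that $\Rea \bigl( (a - \I b)\tp H(u + \I v) \bigr)$ is constant. But $(a - \I b)\tp H$ is a polynomial in $z = u + \I v$, hence a holomorphic function of $z$, and a holomorphic function with constant real part must itself be constant. Therefore $(a - \I b)\tp H \in \C$, and since $a - \I b \neq 0$, this is the required $\C$-linear dependence among the components of $H$.

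There is no serious obstacle; the only delicate point is verifying that the optional conditions on $\jac H$ in the definition of $\jc$ are inherited by $\jac \tilde H$, which is immediate from the $\R$-algebra embedding being a ring homomorphism with $\det_\R = |\det_\C|^2$ on its image.
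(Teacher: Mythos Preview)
Your argument is correct and follows essentially the same route as the paper: the first step $\djc[\R,4n] \Leftrightarrow \jc[\R,2n]$ is identical, and your direct implication $\jc[\R,2n] \Rightarrow \jc[\C,n]$ via the Cauchy--Riemann realification $\tilde H = (\Rea H,\Ima H)$ is exactly the construction underlying the paper's chain $\jc[\R,2n] \Rightarrow \cjc[\R,2n] \Leftrightarrow \jc[\C,n]$ (Theorem~\ref{cjcrdp}, proved like Theorem~\ref{cjcr}). The only difference is organizational: since you have the stronger hypothesis $\jc[\R,2n]$ rather than $\cjc[\R,2n]$, you need not verify the $\cjc$ symmetry of $\tilde H$, and your back-transfer ``holomorphic with constant real part is constant'' is precisely the dependence-problem analog of what Theorem~\ref{cjcr}'s argument gives.
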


\begin{proof}
$\djc[\R,4n] \Leftrightarrow \jc[\R,2n] \Rightarrow \cjc[\R,2n] 
\Leftrightarrow \jc[\C,n]$.
\end{proof}

\subsection*{Planar singular Hessians and planar nilpotent Jacobians}

If we assume that $\det \jac H = 0$ instead of that $\jac H$ is nilpotent, then we can
transform \antisyms\ more freely, because we do not need to conjugate. We can just compose
with maps in $\GL_n(K)$, where $K$ is a field of characteristic zero. 
Or with maps in $\GL_n(A)$, if we replace $K$ by an integral domain $A$. 
Now if $\jac H$ has \antisym\ $\trasjc$, then
its trace is zero. So if $\jac H$ has dimension $2$, then $\jac H$ is nilpotent, if
and only if $\det \jac H = 0$ and $\jac H$ has \antisym\ $\trasjc$. We can use this
observation to prove the following (which can also be derived directly from 
\cite[Th.\@ 3.1]{art3}, see \cite[Cor.\@ 5.1.2]{homokema}).

\begin{theorem} \label{hessdim2ufd}
Assume $A$ is a unique factorization domain of characteristic zero and 
$h \in A[x_1,x_2]$ such that $\det \hess h = 0$. Then $h$ is of the form
$$
g(a x_1 - b x_2) + (c x_1 - d x_2)
$$
where $g $ is an univariate polynomial over $A$ and $a,b,c,d \in A$. 
Furthermore, $g$ can be taken constant in case $\rk \hess h = 0$.
\end{theorem}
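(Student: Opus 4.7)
The plan is to exploit the UFD structure of $A[x_1, x_2]$ to produce a rank-one factorization of $\hess h$, and then reconstruct $h$ from it. Set $p := h_{x_1}$ and $q := h_{x_2}$, so Hessian symmetry gives $p_{x_2} = q_{x_1}$ and $\det \hess h = 0$ gives $p_{x_1} q_{x_2} = p_{x_2}^2$. Let $u := \gcd(p_{x_1}, p_{x_2})$ in the UFD $A[x_1, x_2]$ and write $p_{x_1} = au$, $p_{x_2} = bu$ with $\gcd(a, b) = 1$. The determinant relation then reads $aq_{x_2} = b^2 u$, and coprimality forces $a \mid u$. Writing $u = av$ yields $p_{x_1} = a^2 v$, $p_{x_2} = q_{x_1} = abv$, $q_{x_2} = b^2 v$, i.e.\ a rank-one factorization $\hess h = v \cdot (a,b)\tp (a,b)$.

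Next I claim $a, b \in A$. The two mixed-partial identities $p_{x_1 x_2} = p_{x_2 x_1}$ and $q_{x_1 x_2} = q_{x_2 x_1}$ can be combined (multiply by $b$ and $a$ respectively, subtract, cancel a nonzero $v$) to give $a(ba_{x_2} - ab_{x_2}) = b(ba_{x_1} - ab_{x_1})$. Coprimality yields $a \mid ba_{x_1}$, hence $a \mid a_{x_1}$; the inequality $\deg_{x_1} a_{x_1} < \deg_{x_1} a$ then forces $a_{x_1} = 0$, and symmetrically $b_{x_2} = 0$, so $a \in A[x_2]$ and $b \in A[x_1]$. Feeding this back gives $a_{x_2} = -b_{x_1} =: c_0 \in A$, so $a = c_0 x_2 + \alpha$ and $b = -c_0 x_1 + \beta$. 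Substituting these forms into one compatibility identity produces the PDE
\[ b v_{x_1} - a v_{x_2} = 3 c_0 v. \]
Writing $v = \sum_k v_k$ with each $v_k$ homogeneous of degree $k$ and $v_n \ne 0$ the top piece, the degree-$n$ part of the left side equals $-c_0 E(v_n) = -c_0 n v_n$ (Euler operator), while the right side contributes $3c_0 v_n$. Hence $c_0 (n+3) v_n = 0$, which in characteristic zero forces $c_0 = 0$, so $a, b \in A$.

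With $a, b \in A$ coprime constants, the factorization gives $bp_{x_1} = ap_{x_2}$ and $bq_{x_1} = aq_{x_2}$. A direct coefficient chase using $\gcd(a^n, b^n) = 1$ shows that the degree-$n$ part of $p$ is a constant multiple in $A$ of $(ax_1 + bx_2)^n$, so $p = P(ax_1 + bx_2)$ and similarly $q = Q(ax_1 + bx_2)$ for some $P, Q \in A[t]$. Hessian symmetry gives $bP' = aQ'$; coprimality yields $P' = aR$, $Q' = bR$ for some $R \in A[t]$. Antidifferentiating $R$ in characteristic zero gives $G \in A[t]$ with $P = aG + C$ and $Q = bG + D$, $C, D \in A$. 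If $g \in A[t]$ satisfies $g' = G$, then $g(ax_1 + bx_2) + Cx_1 + Dx_2$ has the same gradient as $h$ and so agrees with $h$ up to an additive constant (absorbed into $g$). Renaming $b \mapsto -b$ and $D \mapsto -D$ gives the asserted form $g(ax_1 - bx_2) + (cx_1 - dx_2)$. The degenerate subcases ($v = 0$ or $ab = 0$) are handled directly; the case $\rk \hess h = 0$ is exactly $\hess h = 0$, whence $p, q$ are constants and $g$ can be taken constant.

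The main obstacle is the proof that $c_0 = 0$ in the second step: one must combine both compatibility identities correctly, use UFD divisibility to pin down the linear shape of $a$ and $b$, and then execute the Euler-operator computation cleanly on the top homogeneous component of $v$. Everything else reduces to routine UFD bookkeeping and antidifferentiation in characteristic zero.
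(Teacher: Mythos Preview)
Your approach is genuinely different from the paper's. The paper observes that if $H := \grad h$ then $(-H_2,H_1)$ has nilpotent Jacobian (trace zero by the symmetry of $\hess h$, determinant zero by hypothesis), and then invokes a known classification of two-dimensional maps with nilpotent Jacobian over a UFD (\cite[Th.~7.2.25]{arnoboek}); from that normal form one reads off $h$ immediately. Your route via a direct rank-one factorization of $\hess h$ in the UFD $A[x_1,x_2]$ is more self-contained, and the Euler-operator argument pinning down $c_0=0$ is clean.

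There is, however, a real gap in your final reconstruction. You write ``If $g\in A[t]$ satisfies $g'=G$'', but you never verify that such a $g$ with coefficients in $A$ exists. Characteristic zero lets you divide by the integer $k$, but the quotient $G_{k-1}/k$ need not lie in $A$ a priori: from $aG_k=p_k$ and $bG_k=q_k$ alone there is no factor of $k{+}1$ available, so the valuation trick that worked for $G=\int R$ does not apply here. (That first antidifferentiation \emph{does} go through, though again not merely ``by characteristic zero'': from $ar_k=(k{+}1)p_{k+1}$ and $br_k=(k{+}1)q_{k+1}$ and $\gcd(a,b)=1$, a prime-by-prime valuation check in the UFD gives $(k{+}1)\mid r_k$.)

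The fix is to go back to $h$ itself. Apply Euler's identity to the degree-$m$ homogeneous part $h_m$ (for $m\ge 2$): since $(h_m)_{x_1}=p_{m-1}(ax_1+bx_2)^{m-1}=aG_{m-1}(ax_1+bx_2)^{m-1}$ and similarly for $(h_m)_{x_2}$, you get
\[
m\,h_m \;=\; G_{m-1}\,(ax_1+bx_2)^{m}\,.
\]
Reading off the $x_1^m$ and $x_2^m$ coefficients shows $G_{m-1}a^m/m,\ G_{m-1}b^m/m\in A$, and now $\gcd(a^m,b^m)=1$ in the UFD forces $G_{m-1}/m\in A$. With this addition your argument is complete.
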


\begin{proof}
Let $H := \grad h$. Since $\jac H$ has symmetry $\sjc$, we see that
$\jac (-H_2,H_1)$ has \antisym\ $\trasjc$, thus $\tr \jac (-H_2,H_1) = 0$.
Since $\det \jac (-H_2,H_1) = 0$ as well, we get that $\jac (-H_2,H_1)$ is
nilpotent. By \cite[Th.\@ 7.2.25]{arnoboek} (see also \cite{esshubnc}), we obtain
that $(-H_2,H_1)$ is of the form
$$
\left( \begin{array}{c} -H_2 \\ H_1 \end{array} \right) =
\left( \begin{array}{c} b g'(a x_1 - b x_2) + d \\ 
       a g'(a x_1 - b x_2) + c \end{array} \right)
$$
where $g'$ is the derivative of an univariate polynomial $g$ over $A$.
Now one can easily see that $H$ is the gradient map of 
$g(a x_1 - b x_2) + (c x_1 - d x_2)$. Hence $h$ is of the desired form.
\end{proof}

\section*{Conclusion}

We have seen that besides the regular diagonal symmetry, there are many other
interesting \antisym\ properties, over which a lot can be said in connection with the 
Jacobian conjecture.


\begin{thebibliography}{99}

\bibitem{art3}
Bondt M.C. de, Essen A.R.P. van den,
Singular Hessians,
J. Algebra 282 (2004), no.\@ 1, 195--204

\bibitem{art6}
Bondt M.C. de, Essen A.R.P. van den,
A reduction of the Jacobian conjecture to the symmetric case,
Proc.\@ Amer.\@ Math.\@ Soc.\@ 133 (2005), no.\@ 8, 2201--2205 (electronic)

\bibitem{art10}
Bondt M.C. de,
Quasi-translations and counterexamples to the homogeneous dependence problem,
Proc.\@ Amer.\@ Math.\@ Soc.\@ 134 (2006), no.\@ 10, 2849--2856 (electronic)

\bibitem{dillenext}
Bondt M.C. de,
Constant polynomial Hessian determinants in dimension three,
arXiv:1203.6605, 2012

\bibitem{dillen}
Dillen F.,
Polynomials with constant Hessian determinant.
J. Pure Appl.\@ Algebra 71 (1991), no.\@ 1, 13--18

\bibitem{gornoet}
Gordan P., N{\"o}ther M.,
Ueber die algebraischen Formen, deren Hesse'sche Determinante
identisch verschwindet,
Math.\@ Ann.\@ 10 (1876), no.\@ 4, 547--568

\bibitem{homokema}
Bondt M.C. de,
Homogeneous Keller maps, Ph.D. thesis, Radboud University Nijmegen, July 2009, \\
\verb+http://webdoc.ubn.ru.nl/mono/b/bondt_m_de/homokema.pdf+

\bibitem{druzA2} 
Dru{\.z}kowski L.M., New reduction in the Jacobian conjecture,
Effective methods in algebraic and analytic geometry, 2000 (Krak{\'o}w),
Univ.\@ Iagel.\@ Acta Math.\@ No.\@ 39 (2001), 203--206 

\bibitem{druzsym}
Dru{\.z}kowski L.M., The Jacobian conjecture: symmetric reduction and 
solution in the symmetric cubic linear case,
Ann.\@ Polon.\@ Math. 87 (2005), 83--92

\bibitem{arnoboek}
Essen A.R.P. van den, Polynomial Automorphisms and the Jacobian Conjecture,
Vol.\@ 190 in Progress in Mathematics, Birkh{\"a}user, 2000

\bibitem{esshubnc}
Essen A.R.P. van den, Hubbers E.-M.G.M., 
A new class of invertible polynomial maps,
J. Algebra 187 (1997), no.\@ 1, 214-226

\bibitem{meng} 
Meng G., Legendre transform, Hessian conjecture and tree formula.
Appl.\@ Math.\@ Lett., 19 (2006), no.\@ 6, 503-510.

\end{thebibliography}
\end{document}